\documentclass[10pt]{amsart}

\usepackage[utf8]{inputenc}
\usepackage[all]{xy}
\usepackage{amsmath,amssymb}
\usepackage{amssymb}
\usepackage{algorithmicx}
\usepackage[english]{babel}
\usepackage{graphics}
\usepackage{graphicx}
\graphicspath{ {images/} }
\usepackage{epstopdf}
\usepackage{amsmath}
\usepackage{amsfonts}
\usepackage{amsthm} 
\usepackage{mathrsfs}
\usepackage{tikz-cd}
\usepackage{tikz}

\usetikzlibrary{shapes, shadows, arrows}
\usepackage[noadjust]{cite}

\makeatletter
\def\blfootnote{\gdef\@thefnmark{}\@footnotetext}
\makeatother

\usepackage{epsfig}
\usepackage{graphics}
\usepackage{dcpic, pictexwd}
\usepackage{float}

\usepackage{mathtools}

\usepackage{subcaption}
\usepackage{adjustbox}
\usepackage{xcolor}
\theoremstyle{plain}
\usepackage{amsthm}
\usepackage{thmtools}
\usepackage{float}
\usepackage{todonotes}

\newcommand{\m}[1]{\mathcal{M}(#1)}

\newtheorem*{theorem*}{Theorem}

\newtheorem{theorem}{Theorem}[section]

\newtheorem{lemma}[theorem]{Lemma}
\newtheorem{proposition}[theorem]{Proposition}
\newtheorem{corollary}[theorem]{Corollary}

\theoremstyle{remark}
\newtheorem*{note*}{Note}
\newtheorem{remark}[theorem]{Remark}

\theoremstyle{definition}
\newtheorem{definition}[theorem]{Definition}

\definecolor{darkgreen}{rgb}{0.0, 0.5, 0.0}

\usepackage[colorlinks=true,
                    linkcolor=blue,
                    urlcolor=blue,
                    citecolor=blue,
                    anchorcolor=blue]{hyperref}

\def\mod{{\rm Map}}

\begin{document}

 \title[Small Sets of Generators for Handlebody Groups] {Small Sets of Generators for Handlebody Groups}

\author[T{\"{u}}l\.{i}n Altun{\"{o}}z, Celal Can Bellek, Em{\.{i}}r G{\"{u}}l,       Mehmetc\.{i}k Pamuk, and O\u{g}uz Y{\i}ld{\i}z ]{T{\"{u}}l\.{i}n Altun{\"{o}}z, Celal Can Bellek, Em{\.{i}}r G{\"{u}}l, Mehmetc\.{i}k Pamuk, and O\u{g}uz Y{\i}ld{\i}z}
\address{Faculty of Engineering, Ba\c{s}kent University, Ankara, Turkey} 
\email{tulinaltunoz@baskent.edu.tr} 
\address{Department of Mathematics, Middle East Technical University,
 Ankara, Turkey}
\email{celal.bellek@metu.edu.tr}
\address{Department of Mathematics, Middle East Technical University,
 Ankara, Turkey}
\email{gul.emir@metu.edu.tr} 
\address{Department of Mathematics, Middle East Technical University,
 Ankara, Turkey}
 \email{mpamuk@metu.edu.tr}
 \address{Department of Mathematics, Middle East Technical University,
 Ankara, Turkey}
  \email{oguzyildiz16@gmail.com}

\begin{abstract}
The mapping class group of a $3$-dimensional handlebody of genus $g$, denoted by $\mathcal{M}(V_g)$, is a fundamental object of study in geometric topology. Building upon the initial generators introduced by Suzuki and their explicit formulation by Takahashi, Wajnryb established that $\mathcal{M}(V_g)$ is generated by exactly five elements for $g \ge 2$. Motivated by recent minimality results in related subgroups we investigate further reductions to this generating set. Through the use of the relations in Wajnryb's presentation, we show that for $g \geq 5$, the handlebody group $\m{V_g}$ is generated by three elements, and for $g \geq 3$, $\m{V_g}$ is generated by four elements, reducing Wajnryb's generating set of five elements by two and one respectively.
\end{abstract}
\maketitle

\blfootnote{\textup{2020} \textit{Mathematics Subject Classification}:57M07, 20F05, 20F38}
\blfootnote{\textit{Keywords}: Handlebody Groups, Handlebodies, Mapping class groups, small generating sets}
\setcounter{secnumdepth}{2}
\setcounter{section}{0}

\section{Introduction}

Finding a minimal generating set for $\mathcal{M}(V_g)$ and its related subgroups has been a subject of extensive study. Initially, a generating set consisting of six families of maps was defined by Suzuki~\cite{Suzuki}. Subsequently, Takahashi~\cite{Takahashi} expressed these generators explicitly in terms of the Dehn twists on the boundary surface. Wajnryb~\cite{Wajnryb1998} then gave a presentation for the handlebody group $\mathcal{M}(V_g)$ for $g \ge 2$, and proved that it is generated by five elements.

The quest to further minimize these generating sets remains highly active. For example, recent work by Omori~\cite{Omori2023} demonstrated that the balanced superelliptic handlebody group, a specific subgroup of $\mathcal{M}(V_g)$, can be generated by exactly four elements. Motivated by these reductions in related subgroups, this paper investigates the full handlebody group. By employing the relations given by Wajnryb's, we eliminate redundancies in Wajnryb's set to prove our main results. We shall denote the minimum cardinality among the generating sets of a group $G$ by $d(G)$.

\begin{theorem}
For $g \geq 5$, the handlebody group $\mathcal{M}(V_g)$ admits a generating set of $3$ elements. In particular,
\begin{align*}
d(\m{(V_g}) \leq 3 \quad \text{for all } g \geq 5.
\end{align*}
\end{theorem}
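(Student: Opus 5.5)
We start from Wajnryb's five-element generating set of $\m{V_g}$ for $g\ge 2$. In the notation of Suzuki and Takahashi it consists of: a twist-type element $t$ (a twist along a meridian disk of one handle); the ``cyclic shift'' $\rho$ permuting the $g$ handles cyclically; the handle-exchange $\omega_1$ swapping the first two handles (so that $\rho$ and $\omega_1$ generate the symmetric group action on handles); a handle-rotation $\tau_1$ turning the first handle over; and a handle-slide $\sigma$ sliding the first handle through the second. The strategy is to build three elements from these and show that the subgroup $H$ they generate contains all five. We keep $\rho$, or a modification of it, as one generator, because conjugation by $\rho$ moves any element supported near handles $i,i+1$ to the analogous element near handles $i+1,i+2$. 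We then fold the remaining four generators into two products of the form $\rho\, x$ or $x\,y$, where $x$ and $y$ are supported on disjoint sets of handles. This disjointness is possible only when $g$ is large, and it is where $g\ge 5$ enters.

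Concretely, take the generators $\rho$, $A=\tau_1\,\omega_3$ (rotation of handle $1$ times exchange of handles $3,4$), and $B=t_1\,\sigma_{4}$, with $t_1$ the meridian twist on handle $1$ and $\sigma_4$ a slide supported on handles $4,5$. Here $\sigma_4=\rho^{3}\sigma\rho^{-3}$. With $g\ge5$ the two factors of $A$ have disjoint supports, and so do the two factors of $B$. Hence in each case the factors commute, which is a relation in Wajnryb's presentation.

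The key steps are as follows.
\begin{enumerate}
\item Conjugate $A$ by suitable powers of $\rho$ to get $A_k=\tau_{1+k}\,\omega_{3+k}$, and form commutators or products such as $A\,A_k^{-1}$ or $A\,(\rho^2 A\rho^{-2})$. Using the disjoint-support relations, $\omega_i^2$ being central-type or a product of twists, and the braid relations among the $\omega_i$, these should isolate $\omega_3$ or $\tau_1$ individually. Once one factor is in $H$, so is the other.
\item Similarly, use conjugates $\rho^{k} B\rho^{-k}$ together with $\tau$ and $\omega$ to separate $t_1$ from $\sigma_4$. Conjugating by $\omega$'s permutes which handles the slide involves while leaving $t_1$ controlled, and the relations between $\sigma$ and the $\omega_i$ in Wajnryb's presentation let us compare the two.
\end{enumerate}

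The main obstacle is the separation in these steps. Rotations and exchanges do not simply commute past each other; for example $\omega_1\tau_1\omega_1^{-1}=\tau_2$. The computation must therefore be arranged so that the conjugating element fixes one factor and moves the other to a handle where it can be cancelled.

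For $A$, I would first try conjugating by $\rho^{2}$, which sends $\tau_1\omega_3$ to $\tau_3\omega_5$. I would then use that $\omega_3$ and $\omega_5$ are explicit involution-like handle swaps in the symmetric group on handles, obtained from $\rho$ and $\omega_1$. If this fails, the fallback is to replace $\rho$ by $\rho\,\tau_1$ or a similar product and exploit $\rho^g$ being central, or a known finite-order identity, to recover $\rho$ from a power.

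Once all five Wajnryb generators lie in $H$, we get $H=\m{V_g}$, and hence $d(\m{V_g})\le 3$ for $g\ge5$.
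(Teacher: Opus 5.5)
Your proposal has a genuine gap. Everything rests on steps (1) and (2), and neither is carried out: they are stated as things that ``should'' isolate factors, followed by fallbacks. Nothing shows that even one of the five Wajnryb generators lies in $H=\langle\rho,\ \tau_1\omega_3,\ t_1\sigma_4\rangle$.

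The missing step is not routine, because your configuration lacks the mechanism that makes such separations work. To split a product $xy$ of commuting factors, you need an element already known to be in $H$ that commutes with one factor and satisfies a nontrivial relation with the other. The paper gets this by keeping the exchange $t_1$ as a standalone generator. Since $t_1$ commutes with $\overline{A_3}$ and $s_5$ and satisfies $t_1r_{1,2}t_1=r_{1,2}t_1r_{1,2}$ by (P2), one gets $\overline{t_1}F_1t_1F_1\overline{t_1}=\overline{A_3}^2s_5^4r_{1,2}$, and hence $\overline{A_3}s_5^2$ and $r_{1,2}$. In your set the only standalone generator is $\rho$, and it moves every handle: for $g\ge 5$ no nontrivial power of $\rho$ fixes handle $1$. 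So conjugating by powers of $\rho$ transports both factors at once. Products such as $A\,\rho^2A\rho^{-2}=\tau_1\omega_3\tau_3\omega_5=\tau_1\tau_4\,\omega_3\omega_5$ cancel nothing, because $\omega_3$ and $\tau_3$ do not commute.

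The auxiliary ideas you invoke do not fill this hole:
\begin{itemize}
\item The $\omega_i$ are not involutions in $\m{V_g}$: $\omega_i^2=t_i^2$ is a nontrivial product of meridian twists.
\item Identities among permutations of handles hold only in a quotient. They do not lift to exact identities in $\m{V_g}$ that could cancel factors.
\item Producing $\omega_3,\omega_5$ ``from $\rho$ and $\omega_1$'' is circular, since $\omega_1$ is not yet known to lie in $H$.
\item Step (2) explicitly uses $\tau$ and $\omega$, so it depends on the unfinished step (1).
\item The fallback of replacing $\rho$ by $\rho\tau_1$ and taking powers does not recover $\rho$. With $\rho\tau_i\overline{\rho}=\tau_{i+1}$ you get $(\rho\tau_1)^g=\tau_1\tau_2\cdots\tau_g\,\rho^g$, a product of infinite-order knob twists times $\rho^g$.
\end{itemize}

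A workable repair is to keep one exchange element ($\omega_1$, i.e.\ $t_1$) as a separate generator. Then fold the remaining generators into a single product whose factors all commute with it, except one factor that braids with it. That is exactly the structure of the paper's set $\{R,\,t_1,\,\overline{A_3}r_{1,2}s_5^2\}$.
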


\begin{theorem}
For $g \geq 3$, the handlebody group $\mathcal{M}(V_g)$ admits a generating set of $4$ elements. That is,
\begin{align*}
d(\m{V_g}) \leq 4 \quad \text{for all } g \geq 3.
\end{align*}
\end{theorem}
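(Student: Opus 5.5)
We start from Wajnryb's five-element generating set for $\mathcal{M}(V_g)$, $g\ge 2$. Write it, in Suzuki--Takahashi notation, as follows. $\rho$ is the cyclic shift of the handles, of order $g$ in the relevant quotient. $\omega_1$ is the half-twist exchanging the first two handles. $\tau_1$ is the twist of the first handle, a Dehn twist about a meridian disk boundary. $\rho_1$ is the rotation of the first handle by $\pi$, which reverses the orientation of its core. $\theta$ is the handle slide of the first handle over the second.

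The plan is to show that one of these five generators lies in the subgroup generated by the other four when $g\ge 3$. Then the remaining four generate. Since the only input is Wajnryb's presentation, we will work purely inside $H=\langle$ four chosen generators $\rangle$ and use the relations to produce the fifth.

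First, using conjugation by $\rho$ and $\omega_1$, we get all handle permutations and all conjugates $\tau_i=\rho^{i-1}\tau_1\rho^{1-i}$, $\rho_i$, and $\theta_{ij}$ inside $H$. This works as long as the relevant objects are in $H$.

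Second, we merge two generators into one element. The natural candidate is to replace $\tau_1$ and $\rho$ (or $\rho_1$ and $\rho$) by a product such as $\rho\tau_1$ or $\rho\rho_1$. Because $\tau_1$ is supported on handle $1$ while $\rho$ permutes handles, a suitable power computation gives something useful. With $x=\rho\tau_1$ we have $x^g=\tau_g\tau_{g-1}\cdots\tau_1\,\rho^g$, using commutation of the $\tau_i$ on disjoint handles (a relation in Wajnryb's list). Combined with the $\omega_1$-conjugates, this should isolate $\tau_1\tau_2^{-1}$, and from there $\tau_1$ itself.

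Concretely, the four generators will be $\{\rho\tau_1,\ \omega_1,\ \rho_1,\ \theta\}$. The key steps are:
\begin{enumerate}
\item Compute $\omega_1 (\rho\tau_1)\omega_1^{-1}$ and compare it with $\rho\tau_1$. Using the relations $\omega_1\tau_1\omega_1^{-1}=\tau_2$ and the braid-type relations between $\omega_1$ and $\rho$, we obtain an element of the form $\tau_a\tau_b^{-1}$ times a handle permutation.
\item Use $g\ge3$, so that there is a third handle disjoint from the support of $\omega_1$. This lets us conjugate away the permutation part and extract $\tau_1\tau_3^{-1}$, and then $\tau_1$ alone by a commutator trick with $\theta$. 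The relation between the slide $\theta$ and the twists $\tau_1,\tau_2$ (a lantern/chain-type relation in Wajnryb's presentation) expresses $\tau_1$ in terms of $\theta$-conjugates and twist differences.
\item Conclude that $\tau_1\in H$, hence $\rho\in H$, so all five Wajnryb generators lie in $H$.
\end{enumerate}

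The main obstacle is step 2: recovering a single twist $\tau_1$ from differences $\tau_i\tau_j^{-1}$. The abelianization has to permit it, so I would first check $H_1(\mathcal{M}(V_g))$ from Wajnryb's presentation to make sure $\rho\tau_1$ does not lose a $\mathbb{Z}/2$ factor. If it does, I would instead merge $\rho$ with $\rho_1$, or with $\omega_1$, choosing the pair whose images in $H_1$ are compatible. Then I would redo the same conjugation argument, using the third handle (this is where $g\ge 3$ enters) to separate supports.
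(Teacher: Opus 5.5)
Your proposal is a plan rather than a proof. The one step that carries all the weight, recovering $\tau_1$ (equivalently $\rho$) from the merged element $x=\rho\tau_1$, is never carried out.

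Step 1 does not produce what you need. With $\rho$ a genuine rotation one gets $x\,\omega_1x^{-1}=\tau_2\tau_3^{-1}\omega_2$, and $[\omega_1,x]=\omega_1(x\omega_1x^{-1})^{-1}$ carries no further information. ``Conjugating away the permutation part'' is not a valid operation: conjugation acts on the whole product $\tau_a\tau_b^{-1}\pi$ and cannot delete the factor $\pi$.

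Step 2 invokes an unspecified ``commutator trick with $\theta$'' and a ``lantern/chain-type relation'' that you never locate in Wajnryb's list. The proposal then closes with contingency plans (check $H_1$; if that fails, merge a different pair), so it does not even commit to a generating set. A compatible abelianization is only a necessary condition, so that check could not settle the matter anyway.

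There is also a notational issue. Wajnryb's fifth generator is $u=t_1\cdots t_{g-1}$, not a finite-order rotation. So the simplification $x^g=\tau_1\cdots\tau_g\rho^g$ only helps after you replace $u$ by an honest rotation, as the paper does with $R$.

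The underlying difficulty is structural: $\rho$ is global and does not commute with $\tau_1$, so no short word obviously separates them. The paper instead merges two local generators with disjoint supports. It takes $\{R,t_1,A_1,F\}$ with $F=r_{1,2}s_3$. For $g\ge 3$ the knob twist $s_3$ commutes with both $r_{1,2}$ and $t_1$, while (P2) gives the braid relation $t_1r_{1,2}t_1=r_{1,2}t_1r_{1,2}$. Therefore
\[
\overline{t_1}\,F\,t_1\,F\,\overline{t_1}=s_3^2\,\overline{t_1}\,r_{1,2}\,t_1\,r_{1,2}\,\overline{t_1}=s_3^2\,r_{1,2}=s_3F ,
\]
so $s_3\in G$. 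Then $r_{1,2}=F\overline{s_3}$, $s_1$ is a conjugate of $s_3$ by a power of $R$, and $u$ is a product of conjugates of $t_1$ by powers of $R$.

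The mechanism is that one factor satisfies a braid relation with an available generator while the other commutes with it. A single word then collapses the first factor and doubles the second. In your notation this would mean keeping $\rho,\omega_1,\tau_1$ and merging the slide generator with the knob rotation $\rho_3$ on the third handle; this is exactly where $g\ge 3$ enters. With your choice $\rho\tau_1$ no such mechanism is available. You would have to exhibit explicit words in $\rho\tau_1,\omega_1,\rho_1,\theta$ that produce $\tau_1$ before the claim could be accepted.
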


\begin{remark}
Wajnryb showed that $d(\mathcal{M}(V_g)) \leq 5$ for $g \geq 2$. Our result improves this bound for $g \geq 5$ and $g \geq 3$. Currently, the smallest known lower bound remains $d(\mathcal{M}(V_g)) \geq 2$ (Section~\ref{section:4}), and it is open whether $d(\mathcal{M}(V_g)) =2$.
\end{remark}

The significance of finding minimal generating sets extends beyond  algebraic simplification. Minimal presentations provide tighter bounds on the algebraic complexity of the group and simplify the algorithmic verification of group properties. The genus conditions in our results, specifically $g \geq 3$ and $g \geq 5$, arise naturally from the  requirement of finding mutually disjoint curves on the boundary surface to support specific commuting Dehn twists and knob twists (such as $s_3$ and $s_5$) without interfering with the base generators.

The paper is organized as follows. In Section~2, we review the necessary preliminaries regarding mapping class groups, handlebodies, and Wajnryb's generating set. In Section~3, we provide the proof of our main theorems by constructing our generating sets and demonstrating that the subgroup they generate contain Wajnryb's generating set. Finally, in Section~4, we discuss known lower bounds for the minimal number of generators of $\mathcal{M}(V_g)$ using the Siegel parabolic subgroup and symplectic representations.

\vspace{0.1cm}
\vskip 0.1cm
\noindent{\bf Acknowledgements.} 
 This work is supported by the Scientific and Technological Research Council of Turkey (TUBİTAK) [grant number 125F253]

\section{Preliminaries}\label{preliminaries}

\subsection{Mapping Class Groups}

Let $\Sigma_g$ be a closed, orientable surface of genus $g$. The mapping class group of $\Sigma_g$, denoted by $\mathcal{M}(\Sigma_g)$, is defined as the group of isotopy classes of orientation-preserving self-homeomorphisms of $\Sigma_g$. The group operation is induced by the composition of homeomorphisms.

The fundamental elements of the mapping class group are Dehn twists. Let $\delta$ be a simple closed curve on $\Sigma_g$. A right-handed Dehn twist about $\delta$ is an isotopy class of a homeomorphism supported in an annular neighborhood of $\delta$, which twists the surface by $2 \pi$ radians to the right. 

To simplify notation throughout this paper, we will denote the right-handed Dehn twist about a specific curve using the capitalized Roman letter corresponding to the curve's Greek label. For instance, the Dehn twists about the curves $\alpha_i$, $\beta_i$, and $\gamma_j$ will be denoted by $A_i$, $B_i$, and $C_j$, respectively. 

Furthermore, we adopt a strict overline notation to denote the inverses of mapping classes. For any element $f \in \mathcal{M}(\Sigma_g)$, its inverse $f^{-1}$ is denoted by $\overline{f}$. Consequently, the left-handed Dehn twist about a curve $\alpha_i$ is written as $\overline{A_i}$.

For a surface $S$, given mapping classes $f, g \in \mod(S)$, their product $fg$ is the composition, which is applied from right to left, i.e., $g$ is applied first and then $f$.  Additionally, the conjugation of $f$ by $g$, denoted by $f^g$, is given by
\begin{align*}
    f^g = g\mkern2mu f \mkern3mu\overline{g}.
\end{align*}

\subsection{Handlebody groups} 
 A $3$-dimensional handlebody of genus $g$, denoted by $V_g$, is a compact, orientable $3$-manifold with boundary. It can be constructed by starting with a standard $3$-ball $B^3$ and successively attaching $g$ solid $1$-handles. A solid $1$-handle is the product space $D^1 \times D^2$. The handles are attached by identifying the disks $\{1\}\times D^2$ and $\{-1\}\times D^2$, with a pair of disjoint disks on the boundary sphere $\partial B^3 \cong S^2$ via an orientation-reversing homeomorphism.

Equivalently, $V_g$ can be constructed by taking the boundary connected sum of $g$ solid tori, where a solid torus is $S^1 \times D^2$. This operation, often denoted as $\natural_{i=1}^g (S^1 \times D^2)$, involves selecting pairs of disjoint disks on the boundary of these solid tori and gluing the manifolds together along these disks. The standard genus $g$ handlebody $V_g$ resulting from either of these equivalent constructions is depicted in Figure~\ref{fig:wajmod}.

 The handlebody group, $\m{V_g}$, is the mapping class group of a $3$-dimensional handlebody of genus $g$. Unlike in the surface case, working directly with the isotopy classes is not straightforward. Therefore, we prefer working with the mapping class group of the boundary surface, $\m{\partial V_g}$. We can do that thanks to the following lemma:

  \begin{lemma}\cite{HandlebodyPrimer}\label{blemma}
    Two homeomorphisms $f$ and $f'$ of a handlebody $V_g$ are isotopic if and only if their restriction to the boundary are isotopic.
\end{lemma}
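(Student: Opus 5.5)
The plan is to show that the restriction map $r\colon \mathrm{Homeo}^+(V_g)\to \mathrm{Homeo}^+(\partial V_g)$ induces an injective map on isotopy classes. The "only if" direction is immediate: an isotopy $f_t$ of $V_g$ from $f$ to $f'$ restricts, because homeomorphisms preserve the boundary, to an isotopy $f_t|_{\partial V_g}$ from $f|_{\partial V_g}$ to $f'|_{\partial V_g}$.

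For the converse, replace $f,f'$ by $h=\overline{f'}f$. It then suffices to show the following: if $h|_{\partial V_g}$ is isotopic to the identity, then $h$ is isotopic to the identity on $V_g$.

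\textbf{Step 1: make $h$ the identity on the boundary.} Take a collar $\partial V_g\times[0,1]\subset V_g$. Given an isotopy $\phi_t$ on $\partial V_g$ from $h|_{\partial V_g}$ to the identity, define a homeomorphism of the collar that performs $\phi_t$ on level $t$. Composing $h$ with it gives a map isotopic to $h$ that is the identity on $\partial V_g$. So we may assume $h|_{\partial V_g}=\mathrm{id}$.

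\textbf{Step 2: cut along meridian disks.} Choose a system of $g$ pairwise disjoint properly embedded meridian disks $D_1,\dots,D_g$ that cut $V_g$ into a $3$-ball. Since $h$ fixes $\partial D_i$ pointwise, $h(D_i)$ is a properly embedded disk with the same boundary as $D_i$. Isotope $h(D_i)$ rel boundary to be transverse to $\bigcup D_j$, and remove intersection curves by innermost-disk arguments, which rely on irreducibility of $V_g$ (Alexander's theorem that embedded $2$-spheres in $\mathbb{R}^3$ bound balls). This gives an isotopy, fixed on $\partial V_g$, after which $h(D_i)=D_i$ for all $i$. Then $h(D_i)$ and $D_i$ cobound a ball, and we push across it.

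Next, make $h$ the identity on each $D_i$. A disk homeomorphism fixing the boundary is isotopic rel boundary to the identity by the Alexander trick. We extend that isotopy to a neighborhood $D_i\times[-1,1]$ and need to keep track of orientation; orientation-preservation together with fixing $\partial D_i$ ensures that $h$ does not swap the sides of $D_i$.

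\textbf{Step 3: finish on the ball.} Now $h$ is the identity on $\partial V_g\cup\bigcup_i D_i$. Cutting along the disks, $h$ becomes a homeomorphism of $B^3$ that is the identity on $\partial B^3$. By the Alexander trick it is isotopic to the identity rel boundary. This isotopy descends to $V_g$, proving the lemma.

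The main obstacle is Step 2: running the innermost-disk surgery cleanly so that the isotopies stay fixed on the boundary, handling all $g$ disks at once so that later moves do not destroy earlier ones, and working in the topological category (where we would first appeal to smoothing/PL approximation in dimension $3$). Steps 1 and 3 are standard collar and Alexander-trick arguments.
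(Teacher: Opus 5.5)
The paper gives no argument of its own for this lemma; it simply cites Hensel's primer. Your proof (collar isotopy to make $h$ the identity on $\partial V_g$, innermost-disk surgery using irreducibility to isotope a cut system $D_1,\dots,D_g$ back onto itself rel $\partial V_g$, then the Alexander trick on the disks and on the complementary ball) is correct and is essentially the standard argument behind that citation.

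The point you flag as the main obstacle is handled by induction on the total number of circles in $h(\bigcup_i D_i)\cap\bigcup_j D_j$. Each innermost-disk ball meets $h(\bigcup_k D_k)$ only in the disk being pushed, because everything else is connected to $\partial V_g$ without crossing the bounding sphere, so every move strictly lowers the count. Note also that it is the final push across the ball bounded by $D_i\cup h(D_i)$ that produces $h(D_i)=D_i$, so the last two sentences of your Step 2 paragraph are in the wrong order.
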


Thanks to the Lemma~\ref{blemma}, the following injection is a well-defined group monomorphism.
 
 Let $\Sigma_g$ be the boundary surface of the handlebody $V_g$ ($\partial V_g = \Sigma_g$), and $f: V_g \rightarrow V_g$ such that
\begin{align*}
    \nu:&\m{V_g} \rightarrow \m{\Sigma_g}\\
    &\mkern45mu f \mapsto f|_{\Sigma_g}
\end{align*}

It follows that $\nu(\m{V_g})$ is a subgroup of $\m{\Sigma_g}$. Thus, the handlebody group $\m{V_g}$ is the subgroup of the mapping class group $\m{\Sigma_g}$ consisting of those mapping classes that can be extended to the entire handlebody $V_g$.

Picking an element in $\m{\Sigma_g}$ and determining whether it is in the handlebody group or not is one of the main challenges that arises while generating $\m{V_g}$. In the handlebody group, meridian curves play an important role since Dehn twists about them are the core elements for the group.

\begin{definition}\cite{Wajnryb1998}
    A curve $\alpha$ is a meridian curve if it bounds a disk $D$ in $V_g$ such that $D \cap \Sigma_g = \alpha$.  In this case, $D$ is called a meridian disk.
\end{definition}

Then, one can determine whether a mapping class is in $\m{V_g}$ by using the following two propositions of Griffiths':
\begin{proposition}\cite{griffithssomeele}
    Let $\iota : \Sigma_g \rightarrow V_g$ be the natural inclusion, and $K = \text{ker}(\iota_{\#}: \pi_1(\Sigma_g, p) \rightarrow \pi_1(V_g,p))$.  Then, $K = \{ \alpha_1, \ldots, \alpha_g\}^{\mathcal{N}}$, where $\mathcal{N}$ denotes the normal closure of the related set.

\end{proposition}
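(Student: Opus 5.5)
Because $V_g$ is a boundary connected sum of $g$ solid tori, its fundamental group is free, $\pi_1(V_g,p)\cong F_g$. I would prove the proposition by computing both groups explicitly and identifying the kernel of $\iota_{\#}$ as the normal closure of the meridians.

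\emph{Presentation of the two groups.} Choose standard generators $a_1,b_1,\dots,a_g,b_g$ of $\pi_1(\Sigma_g,p)$ so that $a_i$ is represented by the meridian $\alpha_i$ (joined to $p$ by an arc) and $b_i$ by a longitude running once along the $i$-th handle. This gives
\[
\pi_1(\Sigma_g,p)=\langle a_1,b_1,\dots,a_g,b_g \mid \textstyle\prod_{i=1}^g [a_i,b_i]\rangle .
\]
Build $V_g$ from $\Sigma_g$ by attaching the $g$ meridian disks $D_i$ as $2$-cells along the $\alpha_i$, and then one $3$-cell. Cutting $V_g$ along $D_1\cup\dots\cup D_g$ leaves a $3$-ball, so $\Sigma_g\cup\bigcup D_i$ union that ball is exactly $V_g$.

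\emph{Van Kampen argument.} Attaching a $2$-cell along a loop representing $a_i$ kills exactly the normal closure of $a_i$. Attaching a $3$-cell does not change $\pi_1$. Hence $\iota_{\#}$ is surjective with
\[
\pi_1(V_g,p)\cong \pi_1(\Sigma_g,p)/\{a_1,\dots,a_g\}^{\mathcal N}.
\]
In this quotient the surface relator $\prod_i[a_i,b_i]$ becomes trivial automatically, so the quotient is $\langle b_1,\dots,b_g\rangle\cong F_g$. This agrees with the free group on the cores of the handles, and $\iota_{\#}$ sends $b_i$ to the $i$-th core generator. Therefore
\[
K=\ker\iota_{\#}=\{a_1,\dots,a_g\}^{\mathcal N}=\{\alpha_1,\dots,\alpha_g\}^{\mathcal N}.
\]

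\emph{Main obstacle.} The delicate step is justifying the cell structure: that $V_g$ really is $\Sigma_g$ with the disks $D_i$ glued in plus a single $3$-cell. One must check that the complement of $\Sigma_g\cup\bigcup D_i$ in $V_g$ is an open $3$-ball, which follows from the handle decomposition (one $0$-handle together with $g$ $1$-handles whose cocores are the $D_i$). A further point is basepoint bookkeeping: each $\alpha_i$ must be connected to $p$ by an arc disjoint from the other disks. Since the normal closure is independent of the choice of connecting arc, the result does not depend on these choices.
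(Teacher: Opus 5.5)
The paper gives no proof of this proposition; it imports it from Griffiths and uses it only as input for the membership criterion that follows. Your argument is a correct, self-contained substitute.

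Since $\alpha_1,\dots,\alpha_g$ are the cocores of the $g$ one-handles, cutting along the meridian disks $D_i$ yields a ball $B$. $V_g$ is the quotient of $B$ obtained by re-identifying the paired copies of the $D_i$ on $\partial B$. Hence $V_g=(\Sigma_g\cup\bigcup_i D_i)\cup_\phi B$, where $\phi$ is the quotient map on $\partial B$; it is not injective, which is allowed for a cell attachment. The cell-attachment form of van Kampen's theorem then says that inclusion induces an isomorphism $\pi_1(\Sigma_g,p)/\{\alpha_1,\dots,\alpha_g\}^{\mathcal N}\cong\pi_1(V_g,p)$, i.e.\ $K=\{\alpha_1,\dots,\alpha_g\}^{\mathcal N}$.

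Compared with the bare citation, your route shows that the only geometric input is that the $\alpha_i$ form a complete disk system. The statement fails for an arbitrary set of $g$ meridians, say two parallel ones or a separating one.

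Two minor points. First, the connecting arcs need not avoid the other disks: changing the arc changes a based loop only by conjugation, so the normal closure is unaffected. Second, the surface-group presentation and the identification of the quotient with $\langle b_1,\dots,b_g\rangle$ are not needed for the kernel. However, they yield a proof that avoids the $3$-cell altogether:
\begin{itemize}
\item $\{\alpha_i\}^{\mathcal N}\subseteq K$ because meridians bound disks;
\item $\iota_{\#}$ is onto, since the $b_i$ go to the core generators;
\item the induced surjection $F_g\cong\pi_1(\Sigma_g,p)/\{\alpha_i\}^{\mathcal N}\to\pi_1(V_g,p)\cong F_g$ is an isomorphism because finitely generated free groups are Hopfian.
\end{itemize}
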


\begin{proposition}\cite{Griffiths1964Automorphisms}
    Let $\psi: (\Sigma_g,p) \rightarrow (\Sigma_g, p)$ be an orientation preserving homeomorphism.  Then, $\psi \in \m{V_g}$ if and only if $\psi_{\#}(K) \subset K$.
\end{proposition}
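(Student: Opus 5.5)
This is Griffiths' criterion. The "only if" direction is direct. Suppose $\psi$ extends to a homeomorphism $\Psi$ of $V_g$ fixing $p$, so that $\iota\circ\psi=\Psi\circ\iota$. Then for $x\in K$ we get $\iota_\#(\psi_\#(x))=\Psi_\#(\iota_\#(x))=\Psi_\#(1)=1$, so $\psi_\#(K)\subset K$. For a general isotopy class we may first isotope so the basepoint is fixed. Changing the basepoint path conjugates $\psi_\#$ by an element of $\pi_1(\Sigma_g)$, and $K$ is normal, so the condition does not depend on these choices.

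For the "if" direction I will use the standard meridian-disk argument. Fix the complete system of meridian disks $D_1,\dots,D_g$ bounded by $\alpha_1,\dots,\alpha_g$; cutting $V_g$ along them yields a $3$-ball. The steps are as follows.

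\begin{enumerate}
\item Show that each curve $\psi(\alpha_i)$ is a meridian curve. By hypothesis its class, after joining to $p$, lies in $K$. So it is a simple closed curve on $\Sigma_g$ that is null-homotopic in $V_g$. By Dehn's lemma it bounds an embedded disk $D_i'$ with $D_i'\cap\Sigma_g=\psi(\alpha_i)$.
\item Make the new disks $D_1',\dots,D_g'$ pairwise disjoint. The curves $\psi(\alpha_i)$ are already disjoint. Intersections between the disks can be removed by a standard innermost-circle argument, using that $V_g$ is irreducible.
\item Show that cutting $V_g$ along the $D_i'$ gives a ball. Cut along them and note that $\psi$ takes $\Sigma_g\setminus\bigcup\alpha_i$, a sphere with $2g$ holes, onto $\Sigma_g\setminus\bigcup\psi(\alpha_i)$. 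Hence the boundary of the cut-open manifold is a single sphere made of that planar surface together with $2g$ disk copies. Since $V_g$ is irreducible and the $D_i'$ do not separate it into more than one piece, the cut-open manifold is a $3$-ball.
\item Extend $\psi$. Send each $D_i$ homeomorphically to $D_i'$, extending $\psi|_{\alpha_i}$. This gives a homeomorphism between the boundary spheres of the two cut-open balls. By Alexander's trick it extends over the balls, and the extension respects the gluing, giving an extension of $\psi$ to $V_g$.
\end{enumerate}

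The main obstacle is steps 1 and 2. Dehn's lemma only applies once we know $\psi(\alpha_i)$ is null-homotopic in $V_g$, and the hypothesis $\psi_\#(K)\subset K$ gives exactly this, up to a choice of basepoint arc that does not matter since $K$ is normal. The disk-disjointness surgery must also keep the boundaries fixed, which holds because the intersections are interior circles.

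A cleaner alternative for step 3 is to note that $\pi_1(V_g)$ is free and that $\psi$ induces an automorphism of $\pi_1(\Sigma_g)/K\cong F_g$. One can then cite that handlebodies are determined by their meridian systems. I would still present the disk-cutting argument to keep the proof self-contained.
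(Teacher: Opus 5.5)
The paper gives no proof of this proposition. It is quoted from Griffiths, and Corollary~\ref{cormember} is likewise cited from the literature. Your argument is correct and is the standard self-contained proof. Dehn's lemma turns $\psi_\#(K)\subset K$ into the statement that the curves $\psi(\alpha_i)$ are meridians. Disk surgery, irreducibility of $V_g$ and coning then produce the extension. Along the way you are in effect proving the implications (iv)$\Rightarrow$(iii)$\Rightarrow$(i) of Corollary~\ref{cormember}. What your route adds over the bare citation is that the only nontrivial three-dimensional inputs are Dehn's lemma and the irreducibility of $V_g$.

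Two steps deserve an extra sentence when you write it up.

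In Step~3, let $M'$ be the cut-open manifold, and say why the ball bounded by a pushed-in copy of $\partial M'$ lies on the $M'$ side. The reason is that the other side contains $\partial V_g\neq\emptyset$. Points of $\partial V_g$ cannot lie in the interior of a ball whose boundary sphere is in the interior of $V_g$.

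In Step~2, irreducibility is not what is needed, and "innermost" has to be handled with care because three disks in general position can have triple points. The clean version is inductive. Make $D_k'$ disjoint from the already pairwise disjoint union $D_1'\cup\dots\cup D_{k-1}'$, surgering $D_k'$ along circles that are innermost on that union. Then a surgery never creates new intersections with disks you have already handled.
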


In other words, a mapping class is in the handlebody group $\m{V_g}$, if it maps a meridian curve to another meridian curve. The following corollary brings the above conditions together:

\begin{corollary}\cite{HandlebodyPrimer}\label{cormember}
For a handlebody group of the handlebody of genus $g$, $\m{V_g}$, the following are equivalent:
\begin{itemize}
    \item[$(i)$]  $\phi \in \m{V_g}$,
    \item[$(ii)$] for every meridian curve $\delta \in V_g$, $\phi(\delta)$ is also a meridian curve,
    \item[$(iii)$] For some meridians $\alpha_1, \ldots, \alpha_g$, the images $\phi(\alpha_1), \ldots, \phi(\alpha_g)$ are also meridians,
    \item[$(iv)$] the induced map $\phi_{\#}: \pi_1(\Sigma_g) \rightarrow \pi_1(\Sigma_g)$ preserves $\mathrm{ker}(\pi_1(\Sigma_g) \rightarrow \pi_1(V_g))$.
\end{itemize}
\end{corollary}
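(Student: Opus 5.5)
The plan is to prove the cycle of implications $(i)\Rightarrow(ii)\Rightarrow(iii)\Rightarrow(iv)\Rightarrow(i)$. Throughout, a mapping class of $\Sigma_g$ is identified with its image under $\nu$, which Lemma~\ref{blemma} justifies. In $(iii)$ the meridians $\alpha_1,\ldots,\alpha_g$ are read as a \emph{complete} system of meridians, i.e.\ one cutting $V_g$ into a ball. The standard system is one such. Any other complete system is carried to the standard one by a homeomorphism of $V_g$, so it suffices to treat the standard system appearing in Griffiths' first proposition. Some such reading is needed, since for an arbitrary set of $g$ meridians the implication $(iii)\Rightarrow(iv)$ can fail.

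\textbf{$(i)\Rightarrow(ii)$.} Let $\phi$ be the restriction of a homeomorphism $\Phi$ of $V_g$, and let $\delta$ be a meridian bounding a disk $D$ with $D\cap\Sigma_g=\delta$. Then $\Phi(D)$ is a properly embedded disk in $V_g$, because $\Phi$ preserves $\Sigma_g=\partial V_g$. It satisfies $\Phi(D)\cap\Sigma_g=\phi(\delta)$, so $\phi(\delta)$ is a meridian.

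\textbf{$(ii)\Rightarrow(iii)$.} This is immediate: take the standard meridians.

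\textbf{$(iii)\Rightarrow(iv)$.} This step carries the real content.
\begin{itemize}
\item First, isotope $\phi$ so that it fixes the base point $p$. Regard each $\alpha_i$ as an element of $\pi_1(\Sigma_g,p)$ via a chosen arc to $p$; this choice changes the element only by conjugation.
\item Each image $\phi(\alpha_i)$ bounds a disk in $V_g$, so it is null-homotopic in $V_g$. Hence $\phi_{\#}(\alpha_i)$, which is $\phi(\alpha_i)$ together with the image arc, lies in $K$.
\item Since $\phi_{\#}$ is an automorphism, it maps the normal closure of a set onto the normal closure of the image set. Using $K=\{\alpha_1,\ldots,\alpha_g\}^{\mathcal N}$ from Griffiths' first proposition, we get $\phi_{\#}(K)=\{\phi_{\#}(\alpha_1),\ldots,\phi_{\#}(\alpha_g)\}^{\mathcal N}\subset K$.
\end{itemize}
The expected obstacle is bookkeeping: base points, the choice of connecting arcs, and orientations of the curves. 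All of these are harmless precisely because $K$ is a normal subgroup, which is closed under inverses and conjugation. So the argument should be written to make clear that only membership in the normal subgroup $K$ is ever used.

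\textbf{$(iv)\Rightarrow(i)$.} This is exactly Griffiths' second proposition: an orientation preserving homeomorphism $\psi$ with $\psi_{\#}(K)\subset K$ extends over $V_g$. Lemma~\ref{blemma} then guarantees that the resulting element of $\m{V_g}$ is well defined on isotopy classes.
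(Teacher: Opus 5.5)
Your proposal is correct and takes essentially the paper's route. The paper gives no separate proof: it cites \cite{HandlebodyPrimer} and presents the corollary as simply combining Griffiths' two propositions, which is exactly how you close the cycle in $(iii)\Rightarrow(iv)\Rightarrow(i)$. Your point that $(iii)$ must be read as a complete meridian system, one that cuts $V_g$ into a ball, is a correct and necessary clarification of the statement as written.
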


\subsection{Generators of the handlebody group}\label{gen} Suzuki~\cite{Suzuki} constructed the first generators of $\m {V_g}$ geometrically. After that, Takahashi~\cite{Takahashi} expressed those generators in terms of Dehn twists on the boundary surface, by observing their actions on simple closed curves lying on the boundary. Next, Wajnryb~\cite{Wajnryb1998} showed that $\m{V_g}$ can be generated entirely by Dehn twists, and gave the first presentation for $g\geq 2$. In this subsection, we describe Wajnryb’s generators and the relations in the presentation of $\m{V_g}$. These generators and relations play a central role in the proof of our main theorems.

In~\cite{Wajnryb1998}, Wajnryb used the model depicted in Figure~\ref{fig:wajmod}.

\begin{figure}[H]
      \centering
      \includegraphics[width=0.7\textwidth]{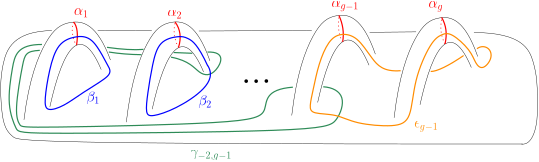}
      \caption{The handlebody $V_g$ with the curves $\alpha_i$, $\beta_i$, $\gamma_{i,j}$, $\epsilon_i$.}
      \label{fig:wajmod}
\end{figure}

 Cutting through the meridian curves $\alpha_i$ in Figure~\ref{fig:wajmod}, we get the second model depicted in Figure~\ref{fig:2ndmodel}.

\begin{figure}[H]
      \centering
      \includegraphics[width=0.60\textwidth]{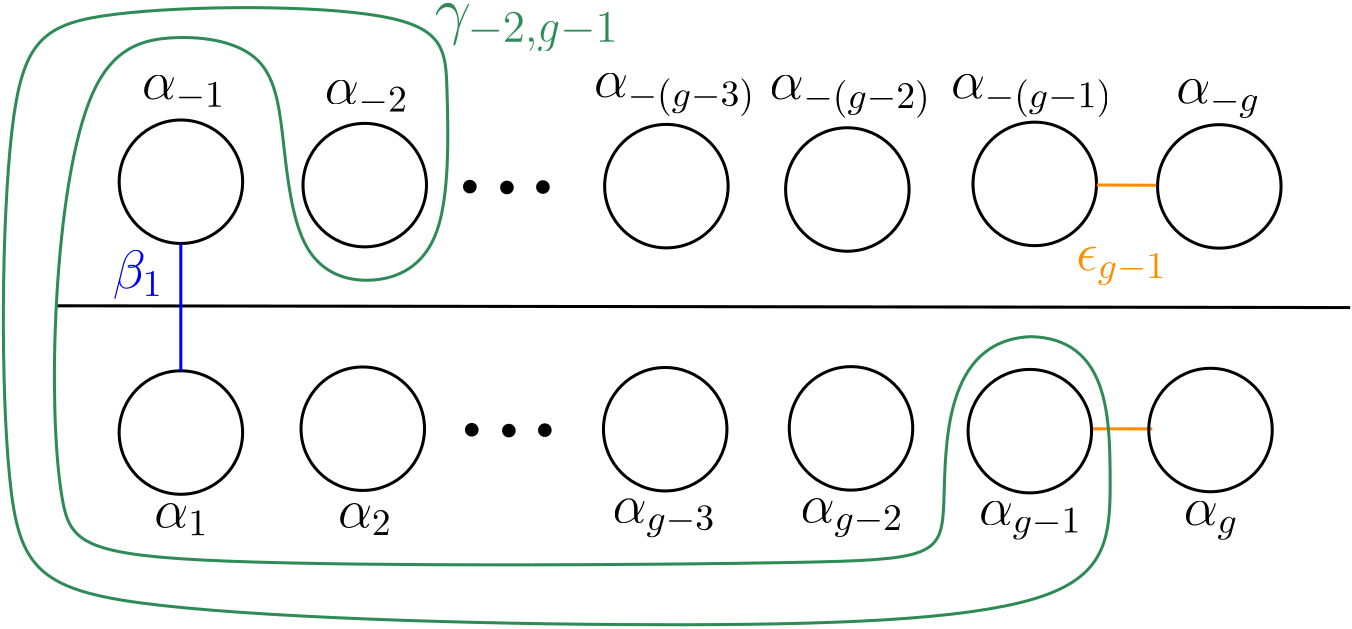}
      \caption{Second model of $V_g$ with the simple closed curves $\alpha_i$'s, $\beta_1$, $\gamma_{-2,g-1}$, $\epsilon_{g-3}$}
      \label{fig:2ndmodel}
\end{figure}

We shall work with these models, switching between the two when convenient. The curves $\alpha_i$ (the meridian curves), $\beta_i$ and $\epsilon_{j}$, where $i=1,2,\dots, g$ and $j = 1,2,\dots, g-1$, are depicted in Figure~\ref{fig:wajmod}. The $ \gamma_{i,j}$ curves are the curves that separate the boundary curves $\alpha_i$ and $\alpha_j$ from the others in the second model, and is depicted in Figure~\ref{fig:2ndmodel}. Note that the indices of $\alpha_k$ range from $-g$ to $g$, excluding $0$.

Furthermore, a simple closed curve that separates a specific set of boundary curves from the others is denoted by $\gamma_I$, where the index set $I$ is an arbitrary subset of the full index set $I_0 = \{-g, \dots, -1, 1, \dots, g\}$. An example of such a generalized separating curve is depicted in Figure~\ref{fig:gammaI}.

\begin{figure}[H]
      \centering
      \includegraphics[width=0.60\textwidth]{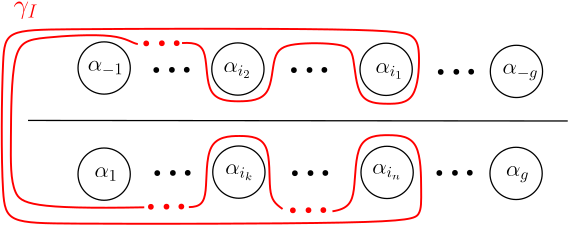}
      \caption{An example of a generalized separating curve $\gamma_I$ on the second planar model of $V_g$, enclosing a specific subset of holes $I = \{i_1, i_2, \ldots, i_n\} \subset I_0$.}
      \label{fig:gammaI}
\end{figure}

We now give the definition of Dehn twists about these curves. Recall that we use the capital letter in the Latin alphabet version of the relating curve in order to denote the Dehn twist about the relating curve. Then, we denote the Dehn twist about $\alpha_i$ by $A_i$ (on the second model of $V_g$, $A_{-i} = A_i$ for any $i= 1,2,\dots,g$), $\beta_i$ as $B_i$, $\epsilon_i$ as $E_i$, $\gamma_{i,j}$ as $C_{i,j}$ (in general, $\gamma_I$ as $C_I$). If the indices are two consecutive numbers, instead of $C_{i,i+1}$, we simply write $C_i$.  Now, we define the half-twist in the interior of $\gamma_{i,j}$.

\begin{definition}[\cite{Wajnryb1998}]\label{def:htwist}
    Let $\gamma_{i,j}$ be the simple closed curve defined as above.  Then, $h_{i,j}$ is the half-twist defined as the rotation by $\pi$ radians in the interior of $\gamma_{i,j}$, and identity in the complement of its interior.
\end{definition}

We also have the following three important elements that are products of Dehn twists. 

\subsubsection{The element $t_i$} \label{ssection:ti}
 The first element is 
 \begin{align*}
 t_i = E_iA_iA_{i+1}E_i
 \end{align*}
 for $i = 1,2,\dots , g$.  This element switches the boundary curves $\alpha_i$ and $\alpha_{i+1}$ in the second model depicted in Figure~\ref{fig:2ndmodel}.  By Definition~\ref{def:htwist}, $t_i$ is equal to $h_{i,i+1}h_{-i-1, -i} A_i^{-1}A_{i+1}^{-1}$.  As an example, the action of $t_1$ on the relating boundary curves is shown in Figure~\ref{fig:t1}.

\begin{figure}[H]
      \centering
      \includegraphics[width=0.45\textwidth]{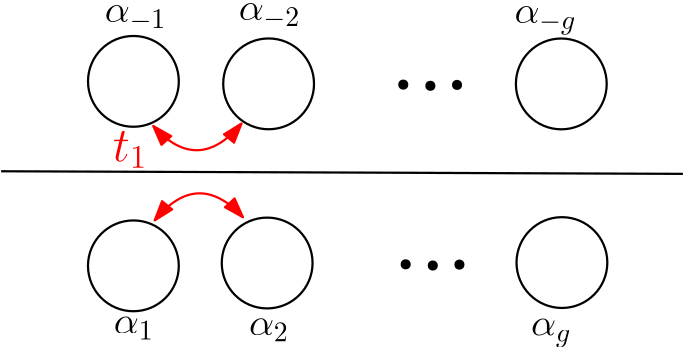}
      \caption{The $t_1$ map}
      \label{fig:t1}
\end{figure}

  \subsubsection{The element $s_i$} \label{ssection:si}
  The next element is called a \textit{knob twist} defined as follows:
  \begin{align*}
  s_i = B_iA_i^2B_i
  \end{align*}
  for $i=1, \ldots , g$.  This element is also a generator of Suzuki~\cite{Suzuki} and Takahashi~\cite{Takahashi}.  In light of Definition~\ref{def:htwist}, we have that $s_i = h_{-i,i}A_i^2$. The action of a knob twist on relating boundary curves is depicted in Figure~\ref{fig:s}.

\begin{figure}[H]
      \centering
      \includegraphics[width=0.45\textwidth]{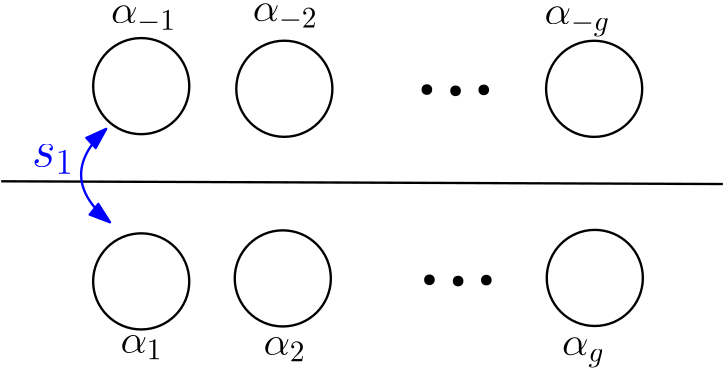}
      \caption{The action of $s_1$ map on the relating boundary curves.}
      \label{fig:s}
\end{figure}

\subsubsection{The element $k_i$} \label{ssection:ki}With the following definition
\begin{align*}
    k_i=A_iA_{i+1}t_i\overline{C_{i}},
\end{align*} we have the following action of $k_i$ on the following curves.  Since $k_i(\alpha_i) = \alpha_{i+1}$ and $k_i(\beta_i) = \beta_{i+1}$ for $i=1, \ldots ,g$, it follows that
\begin{align*}
    (s_i)^{k_i} = (B_iA_i^2B_i)^{k_i} = B_{i+1}A_{i+1}^2B_{i+1} = s_{i+1}.
\end{align*} 
Furthermore,
\begin{align*}
    C_{i,j}^{k_j} &= C_{i,j}^{A_jA_{j+1}t_j\overline{C_j}} = A_jA_{j+1}t_j\overline{C_j}C_{i,j}C_j \overline{t_j}\mkern3mu \overline{A_{i+1}} \mkern3mu \overline{A_j} \\& =C_{i,j}^{A_jA_{j+1}t_j\overline{C_j}} = A_jA_{j+1}t_jC_{i,j}\overline{t_j}\mkern3mu \overline{A_{i+1}} \mkern3mu \overline{A_j} \\& =C_{i,j}^{A_jA_{j+1}t_j\overline{C_j}} = A_jA_{j+1}C_{i,j+1}\mkern3mu \overline{A_{i+1}} \mkern3mu \overline{A_j}
    \\& =C_{i,j+1}.
\end{align*}

\subsubsection{Definition of $r_{i,j}$}Another key elements is $r_{i,j}$ given by
\[
r_{i,j}=B_jA_jC_{\{i, \ldots,j\}}B_j
\]
where $i <j$ and $i,j \in \{1, \ldots , g\}$. Observe that

\begin{align*}
    A_j^{r_{i,j}} = C_{\{i, \ldots,j\}}.
\end{align*}

\subsubsection{General formula for the Dehn twists $C_{i,j}$}Using $s_1$ and $t_i$ and $C_1$, we can get $C_{i,j}$ where $i,j \in I_0$:
{\Large
\begin{align*}
    C_{i,j} = \begin{cases}
        C_{1}^{(t_i t_{i-2} \ldots t_1 t_{j-1} t_{j-2} \ldots t_2)} & \text{if} \mkern10mu i > 0,\\
        C_{1}^{(\overline{t_{-i-1}} \mkern3mu \overline{t_{-i-2}} \ldots \overline{t_{1}} \mkern3mu \overline{s_1}t_{j-1} t_{j-2} \ldots t_2)} & \text{if} \mkern10mu i< 0 \mkern10mu \text{and} \mkern10mu i+j<0,\\
        C_{1}^{(\overline{t_{-i-1}} \mkern3mu \overline{t_{-i-2}} \ldots \overline{t_1} \overline{s_1} t_j t_{j-1} \ldots t_2)} & \text{if} \mkern10mu i<0, \mkern5mu j>0 \mkern10mu \text{and} \mkern10mu i+j <0,\\
        C_{1}^{(\overline{t_{-j-1}} \mkern3mu \overline{t_{-j-2}} \mkern3mu \ldots \mkern3mu \overline{t_1} \mkern3mu \overline{t_{-i-1}} \mkern3mu \overline{t_{-i-2}} \ldots \overline{t_2} \mkern3mu \overline{s_1} \mkern3mu \overline{t_1} \mkern3mu \overline{s_1})} & \text{if} \mkern10mu j< 0,\\
        (s_1^2 A_1^4)^{(\overline{t_{j-1}} C_{j-1} \overline{t_{j-2}} C_{j-2} \ldots \overline{t_1} C_1)} & \text{if} \mkern10mu i+j = 0.
    \end{cases}
\end{align*}}

Note that the last formula comes from the one-holed torus relation.
\subsubsection{Generators and relations of $\m{V_g}$}

Wajnryb stated in~\cite{Wajnryb1998} that the generators of $\m{V_g}$ consist of 
\begin{align*}
    A_1, \ldots , A_g, C_{1,2}, s_1, t_1, \ldots t_{g-1} \text{ and }r_{i,j},
\end{align*}where $i < j$. 

Wajnryb~\cite[Theorem~1]{Wajnryb1998} provides twelve defining relations for this group. For the sake of brevity, we list only those that are used in the proofs of our main theorems:

Let $I_0=-g, \ldots, -1, 1, \ldots, g$.  Then,

\begin{itemize}
    \item[(P1)] If $i < j \in I_0$ and $i=1$ or $i< 0$, $ i+j > 0$, $j-1 \leq g$, then
   \begin{itemize}
        \item[(a)] $A_j^{r_{i,j}} = C_{\{i,\mkern3mu i+1, \mkern3mu \ldots, \mkern3mu j \}}$ and $[r_{i,j}, A_k] = 1$ for $k \neq j$,
        \item[(b)] $[r_{i,j}, t_k] = 1$ if $k < |i|$ or $|i|<k <j-1$ or $k > j$ or $k = i = 1 < j-1$,
       \item[(c)] $[r_{i,j}, s_k] = 1$ if $k < |i|$ or $k >j $ or $k = -i$,
       \item[(d)] $[r_{i,j}, C_{k,m}] = 1$ if $k,m \in \{i, \ldots, j-1\}$ or $k,m \notin \{-j,i,i+1,\ldots,j\}$,
        \item[(e)] $[r_{i,j}, z_j] = 1$ if $j=g$ or $j-1 = g$,
        \item[(f)] $C_{i,j}^{r_{i,j}} = C_J$, where $J = \{k \in I_0 | \mkern5mu i< k \leq j\}$,
       \item[(g)] $C_{-j, 1-j}^{r_{1,j}} = C_{-1,j}^{(t_{j-2}t_{j-3} \ldots t_1)}$,
       \item[(h)] If $i< 0$ and $j+i > 1$, then $C_{-j,1-j}^{r_{i,j}} = C_{\{i-1, \mkern3mu i, \mkern3mu \ldots, \mkern3mu j\}}^{(t_{j-2} t_{j-3} \ldots t_{1-i})}$,  
       \item[(k)] If $j <g$, then $C_{-j-1,-j}^{\overline{r_{i,j}}} = C_{\{i,\mkern3mu i+1 ,\mkern3mu \ldots, \mkern3mu j+1\}} ^{(\overline{s_{j+1}})}$.
       \end{itemize}
       \item[(P2)] If $i < j \in I_0$ and $i=1$, or $i< 0$, $i+j > 0$, $j-1 \leq g$, then $t_{j-1}^{r_{i,j}} = r_{i,j}^{\overline{t_{j-1}}}$,
       \item[(P3)] $[s_1, A_i] = 1$ for $i = 1, \ldots, g$, $A_i^{t_i} = A_{i+1}$ for $i = 1 , \ldots, g-1$, $[A_i, t_j] = 1$ for $j \neq i, i-1$ and $[t_i , s_1 ] = 1$ for $i = 2, \ldots ,g-1$.
       \item[(P4)] If $i< j \in I_0$ and $i=1$ or $i< 0$, $ i+j > 0$, $j-1 \leq g$, then $r_{i,j}^2 = s_j C_{\{i,\mkern3mu i+1, \mkern3mu \ldots, \mkern3mu j \}}s_j \overline{C_{\{i,\mkern3mu i+1, \mkern3mu \ldots, \mkern3mu j \}}} = (B_jA_j^2B_j)C_{\{i,\mkern3mu i+1, \mkern3mu \ldots, \mkern3mu j \}} (B_jA_j^2B_j) \overline{C_{\{i,\mkern3mu i+1, \mkern3mu \ldots, \mkern3mu j \}}}$.
       \item[(P5)] For $g \geq j > 2 $, 
    \begin{align*}
       &r_{1,j} = s_j C_{\{1, \mkern3mu \ldots, j\}} s_j \overline{C_{\{1, \mkern3mu \ldots, j\}}} k_{j-1} A_j C_{\{1, \mkern3mu \ldots, j-2\}}t_{j-1}\overline{C_{\{1, \mkern3mu \ldots, j-1\}}} \mkern3mu \overline{t_{j-1}} \mkern3mu \overline{r_{1,j-1}} s_{j-1} h_j \overline{r_{1,2}} \mkern3mu \overline{h_j} \mkern3mu \overline{k_{j-1}} \mkern10mu \text{with}\\
       &h_j = \overline{k_{j-1}} \mkern3mu \overline{t_{j-2}} \mkern3mu \overline{t_{j-3}} \mkern3mu \ldots \mkern3mu \overline{t_1} k_{j-1} k_{j-2} \ldots k_2.\\
    \end{align*}
\end{itemize}

Using his presentation, Wajnryb~\cite{Wajnryb1998} proved the following.

\begin{theorem}\cite{Wajnryb1998}\label{thmwaj}
    For $g>1$, $\m{V_g}$ is generated by the set 
    \begin{align*}
    \{A_1, s_1, r_{1,2}, t_1, u= t_1\ldots t_{g-1}\}.
    \end{align*}
\end{theorem}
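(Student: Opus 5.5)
Since Wajnryb's presentation already gives the generating set
\[
A_1,\ldots,A_g,\ C_{1,2},\ s_1,\ t_1,\ldots,t_{g-1},\ r_{i,j}\ (i<j),
\]
I would show that each of these lies in the subgroup $H=\langle A_1, s_1, r_{1,2}, t_1, u\rangle$, where $u=t_1\cdots t_{g-1}$. The strategy is to use conjugation by $u$, together with the braid-type relations in (P3), to move indices around. Then every remaining generator is expressed through the relations (P1)--(P5) in terms of elements already obtained. Throughout I take as given that Wajnryb's list generates $\m{V_g}$ for $g>1$, which is the content of his Theorem~1 with its presentation.

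\textbf{Step 1: the $t_i$ and $A_i$.} The $t_i$ satisfy the braid relations (they act as half-twists exchanging adjacent holes in the second model). Hence $u\,t_i\,\overline{u}=t_{i+1}$ for $1\le i\le g-2$, as in the standard Artin group argument. Starting from $t_1$ and conjugating repeatedly by $u$ gives all of $t_1,\ldots,t_{g-1}$. Next, (P3) gives $A_{i+1}=A_i^{t_i}$, so $A_2,\ldots,A_g$ also lie in $H$.

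\textbf{Step 2: $C_{1,2}$ and the $s_i$.} By (P4) with $i=1,j=2$,
\[
r_{1,2}^2=s_2\,C_{\{1,2\}}\,s_2\,\overline{C_{\{1,2\}}}.
\]
Combining this with (P1)(a), which gives $A_2^{r_{1,2}}=C_{\{1,2\}}=C_{1,2}$, shows that $C_{1,2}$ is a conjugate of $A_2$ by an element of $H$, so $C_{1,2}\in H$. Once $C_1$ and the $t_i$ are available, we also have $k_i=A_iA_{i+1}t_i\overline{C_i}\in H$. The $C_i$ come from $C_1$ by conjugating with suitable $t$'s, using the general formula for $C_{i,j}$. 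Then $s_{i+1}=s_i^{k_i}$ puts every $s_i$ in $H$, and the same formula gives every $C_{i,j}$.

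\textbf{Step 3: the $r_{i,j}$, which I expect to be the main obstacle.}
\begin{itemize}
\item For $i=1$, I would induct on $j$ using (P5). This expresses $r_{1,j}$ through $s_j$, $C_{\{1,\ldots,j\}}$, $k_{j-1}$, $A_j$, $t_{j-1}$, $r_{1,j-1}$, $s_{j-1}$, $h_j$ and $r_{1,2}$. All of these lie in $H$ by Steps 1--2 and the inductive hypothesis, since the twists $C_I$ are conjugates of $C_1$ by products of the $t$'s and $s_1$.
\item For general $i$ (including the negative indices in Wajnryb's convention), I would conjugate $r_{1,j}$ by products of $t$'s and $\overline{s_1}$ that move the hole configuration $\{1,\ldots,j\}$ to $\{i,\ldots,j\}$. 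This is modelled on the case formula for $C_{i,j}$ and uses the commutation relations (P1)(b)--(d).
\end{itemize}
The delicate point is verifying that these conjugations really produce $r_{i,j}$ rather than some other element supported on the same subsurface. If a direct conjugation formula is not forthcoming, I would check instead that the conjugated element satisfies the defining relations (P1)(a),(f) and (P4) that characterize $r_{i,j}$, using the geometric description $r_{i,j}=B_jA_jC_{\{i,\ldots,j\}}B_j$.

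With all of Wajnryb's generators in $H$, we get $H=\m{V_g}$.
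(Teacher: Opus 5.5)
The paper does not prove this statement; it is quoted from Wajnryb, so your reduction has to stand on its own. Steps~1 and~2 are fine. (In Step~2, (P1)(a) alone gives $C_{1,2}=A_2^{r_{1,2}}$; (P4) plays no role.)

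The first problem is in the $i=1$ induction of Step~3. There you justify $C_{\{1,\ldots,j\}}\in H$ by saying every $C_I$ is a conjugate of $C_1$ by a word in the $t_k$ and $s_1$. That is false for $|I|\geq 3$. Such words preserve the set $\{\alpha_1,\ldots,\alpha_g\}$, so they act on the second model by permuting the $2g$ holes, and they preserve the number of holes on each side of a curve. But $\gamma_{1,2}$ splits the holes as $2\,|\,2g-2$, while $\gamma_{\{1,\ldots,j\}}$ splits them as $j\,|\,2g-j$. The only description you are left with, $C_{\{1,\ldots,j\}}=A_j^{r_{1,j}}$ from (P1)(a), makes the induction circular.

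This is repairable from the listed relations. $C_{\{1,\ldots,j-1\}}$ and $C_{\{1,\ldots,j-2\}}$ do come from (P1)(a) and the inductive hypothesis (with $C_{\{1\}}=A_1$). Moreover, $C_{\{1,\ldots,j\}}$ enters (P5) only through the prefix $s_jC_{\{1,\ldots,j\}}s_j\overline{C_{\{1,\ldots,j\}}}$, which equals $r_{1,j}^2$ by (P4). Hence (P5) rearranges to $\overline{r_{1,j}}=k_{j-1}A_jC_{\{1,\ldots,j-2\}}t_{j-1}\overline{C_{\{1,\ldots,j-1\}}}\,\overline{t_{j-1}}\,\overline{r_{1,j-1}}\,s_{j-1}h_j\overline{r_{1,2}}\,\overline{h_j}\,\overline{k_{j-1}}$, a word in elements already in $H$. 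Alternatively, (P1)(k) applied to $r_{1,j-1}$ gives $C_{\{1,\ldots,j\}}=\bigl(C_{-j,1-j}^{\overline{r_{1,j-1}}}\bigr)^{s_j}$.

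The genuine gap is the case $i<0$. Once $j<g$, the conjugations you propose do not exist, and such $r_{i,j}$ occur as soon as $g\geq 3$ (e.g.\ $r_{-1,2}$). Words in the $t_k$ and $s_1$ permute the holes compatibly with the pairs $\{k,-k\}$. One side of $\gamma_{\{1,\ldots,j'\}}$ contains no complete pair, whereas both sides of $\gamma_{\{i,\ldots,j\}}$ do, namely $\{\pm 1\}$ and $\{\pm(j+1)\}$. So no such word carries $\gamma_{\{1,\ldots,j'\}}$ to $\gamma_{\{i,\ldots,j\}}$.

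Your fallback does not close this either. Satisfying (P1)(a),(f) and (P4) does not determine an element, so checking these relations cannot identify some conjugate with $r_{i,j}$. None of the relations reproduced in the paper expresses an $r_{i,j}$ with $i<0$ through the other generators; they record commutations, the action on particular curves, the square (P4) and the braid-type relation (P2). So this step needs an ingredient your proposal lacks: an explicit formula for these $r_{i,j}$ in terms of the $r_{1,k}$ and the remaining generators. That would have to come from Wajnryb's full presentation or from a new geometric argument. Without it, $r_{i,j}\in H$ for $i<0$ is not established.
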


In the next section, we reduce the generating set to four elements for $g\geq 3$.

\section{Minimal Set of Generators}

In this section, we prove the main results.  First, we show that for $g \geq 3$, $\m{V_g}$ is generated by four elements (Theorem~\ref{thm:genby4}).  Secondly, we strengthen the first result for $g \geq 5$ by proving Theorem~\ref{thm:genby3}.  In this theorem, we show $\m{V_g}$ is generated by three elements.  The proofs rely on the elements introduced in Section~\ref{preliminaries} together with the relations given there, which serves as the main tool throughtout the proofs. 

Throughout this section, let $R$ be the clockwise rotation of $\frac{2\pi}{g}$ radians of $V_g$ about the vertical axis passing through its center, as depicted in Figures~\ref{figs:rotation1} and~\ref{figs:rotation2}.

\begin{figure}[H]
\centering
\begin{minipage}{0.30\textwidth}
    \centering
    \includegraphics[width=\linewidth]{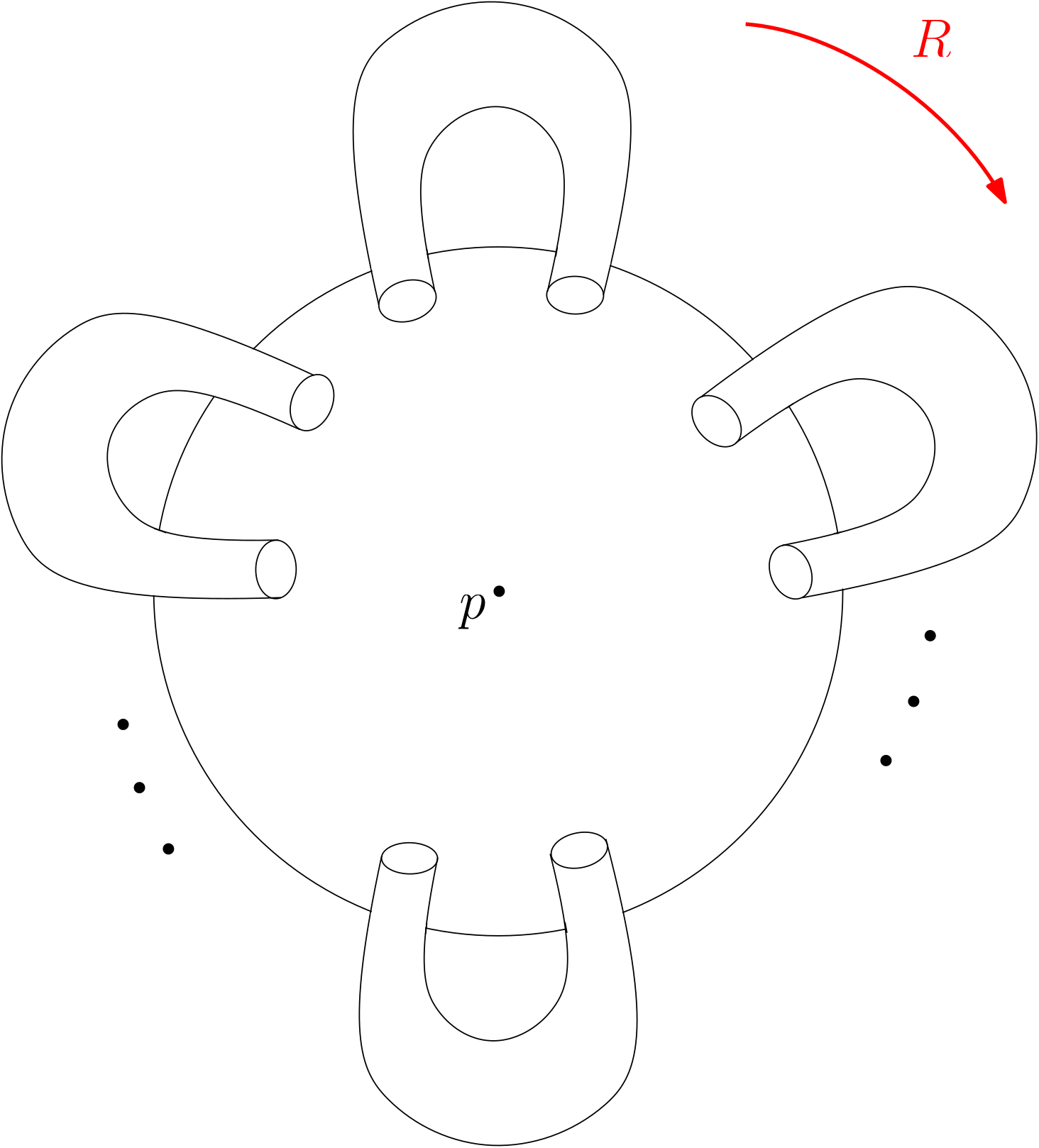}
    \caption{The rotation map on the first model of a handlebody $V_g$.}
    \label{figs:rotation1}
\end{minipage}
\hspace{0.25\textwidth}
\begin{minipage}{0.30\textwidth}
    \centering
    \includegraphics[width=\linewidth]{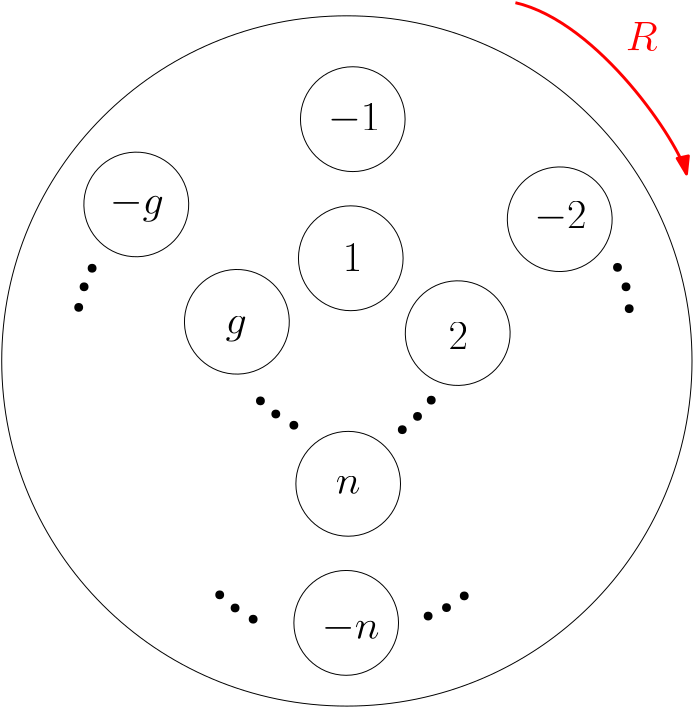}
    \caption{The rotation map $R$ on the second model of a handlebody $V_g$.}
    \label{figs:rotation2}
\end{minipage}
\end{figure}

We now generate the handlebody group $\m{V_g}$ for $g \geq 3$ by using four elements.

\begin{theorem}\label{thm:genby4}
    For $g\geq 3$, the handlebody group $\m{V_g}$ is generated by the set
    \[ S = \{R, t_1, A_1, r_{1,2} s_3\}. \]
\end{theorem}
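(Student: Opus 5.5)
Let $G=\langle R, t_1, A_1, r_{1,2}s_3\rangle$. By Theorem~\ref{thmwaj} it suffices to show that $G$ contains $s_1$, $r_{1,2}$ and $u=t_1\cdots t_{g-1}$, since $A_1,t_1\in G$ already. First we need $R\in\m{V_g}$: the rotation $R$ extends over $V_g$ and permutes the meridians $\alpha_i$ cyclically, so it lies in $\m{V_g}$ by Corollary~\ref{cormember}. We will use $R$ as a substitute for $u$. On the standard picture $R$ carries $\alpha_i,\beta_i,\epsilon_i$ to $\alpha_{i+1},\beta_{i+1},\epsilon_{i+1}$, with indices taken mod $g$, at least for the curves not wrapping around the back handle. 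Hence
\[
A_{i+1}=A_i^{R},\qquad t_{i+1}=t_i^{R}\ (1\le i\le g-2),\qquad s_{i+1}=s_i^{R}.
\]
Starting from $A_1,t_1\in G$, this gives all $A_i$ and $t_1,\dots,t_{g-1}$ in $G$, and therefore $u\in G$.

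Next we separate $r_{1,2}$ from $s_3$ inside $x:=r_{1,2}s_3$. The knob twist $s_3$ is supported near $\alpha_3\cup\beta_3$, disjoint from the support of $r_{1,2}$; here $g\ge3$ is used. Thus $r_{1,2}$ and $s_3$ commute, which is also relation (P1)(c) with $k=3>j=2$. Conjugating by powers of $R$ gives $x^{R^m}=r_{1+m,2+m}\,s_{3+m}$ for a shifted copy of $r_{1,2}$. This makes it awkward to combine two conjugates directly, so we use instead the elements already in $G$ together with relations (P1)(a)--(c) and (P3). Specifically:
\begin{itemize}
\item $[r_{1,2},A_k]=1$ for $k\ne2$, and $[s_3,A_k]=1$ for $k\ne3$ (by (P3) conjugated by $R$);
\item $[r_{1,2},t_k]=1$ for $k>2$, while $s_3$ interacts with $t_2,t_3$.
\end{itemize}
The plan is to conjugate $x$ by a word $w$ in the $t_k$'s that moves $\alpha_3,\beta_3$ away while fixing $r_{1,2}$. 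For $g\ge4$, take $w=t_3$: it commutes with $r_{1,2}$ by (P1)(b) ($k=3>j=2$), and $s_3^{t_3}=s_4$ since $t_3$ swaps handles $3$ and $4$. This gives $x^{t_3}=r_{1,2}s_4$, and therefore $\overline{x}\,x^{t_3}=\overline{s_3}s_4\in G$. That alone does not isolate $s_3$. A better choice is to use $A_3$. Since $s_3=h_{-3,3}A_3^2$ and $s_3^{\,2}$ is a twist about the curve $\gamma_{-3,3}$ bounding the one-holed torus, we compute $x^2=r_{1,2}^2s_3^2$ and use (P4), $r_{1,2}^2=s_2C_{\{1,2\}}s_2\overline{C_{\{1,2\}}}$. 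The cleanest route is a lantern- or chain-type expression: $s_3$ commutes with everything in the handles $1,2$ and is conjugate, via $R$, to $s_1$. We would then obtain $s_1$ from $x$ by writing $s_3=\overline{r_{1,2}}\,x$ and expressing $r_{1,2}$ through its action. By (P1)(a) and (P1)(f), $r_{1,2}$ conjugates $A_2$ to $C_{1,2}$, and we can recover $C_{1,2}$ as $A_2^{x}$, since $s_3$ fixes $\alpha_2$. This gives $C_1\in G$. Then $t_1$ and $C_1$ give $k_1=A_1A_2t_1\overline{C_1}\in G$, and more generally all $k_i$ via $R$.

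The main obstacle is producing $s_1$ or $r_{1,2}$ individually. Our first attempt is the following. For $g=3$, $R$ sends handle $3$ to handle $1$ and conjugates $s_3$ to $s_1$, but it moves $r_{1,2}$ to $r_{2,3}$. Instead, we use $k_i$ to move the $s$-factor alone: by (P1)(c) with $k<|i|$ we want conjugators commuting with $r_{1,2}$. For $g\ge3$, consider the element $y=x\,\overline{x^{t_1}}$-type commutators. By (P2), $t_1^{r_{1,2}}=r_{1,2}^{\overline{t_1}}$ while $[t_1,s_3]=1$, so $x^{t_1}=r_{1,2}^{t_1}s_3$. Hence
\[
x^{t_1}\,\overline{x}=r_{1,2}^{t_1}\,\overline{r_{1,2}}\in G,
\]
a pure $r$-word. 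Combining this with (P2) in the form $\overline{t_1}r_{1,2}t_1\,r_{1,2}=r_{1,2}t_1r_{1,2}\overline{t_1}$ rearranged, we aim to solve for $r_{1,2}$ and hence $s_3=\overline{r_{1,2}}x$. Then $s_1=s_3^{R^{g-2}}\in G$, which completes the generation of Wajnryb's set. This algebraic extraction step is where we expect the real work to lie; if (P2) is insufficient, we fall back to expressing $s_3$ via (P4) and the one-holed torus formula $C_{-3,3}$ from the general $C_{i,j}$ list.
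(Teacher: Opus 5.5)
\paragraph{Gap: the extraction step is never carried out.}
Your preliminary steps are fine: you get all $t_i$ and hence $u$ from $R$ and $t_1$, and $s_3$ commutes with $t_1$ and $r_{1,2}$. But the one step that carries the theorem, separating $r_{1,2}$ from $s_3$ inside $x=r_{1,2}s_3$, is never done. You try and abandon several routes: $x^{t_3}$, $x^2$ with (P4), $A_2^x=C_{1,2}$, and a ``lantern- or chain-type expression''. The proposal then ends with ``we aim to solve for $r_{1,2}$'' and a vague fallback, so as written it does not prove the statement.

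The rearrangement of (P2) you display,
\[
\overline{t_1}\,r_{1,2}\,t_1\,r_{1,2}=r_{1,2}\,t_1\,r_{1,2}\,\overline{t_1},
\]
is also false. Reducing both sides with the braid relation turns it into $r_{1,2}t_1=t_1r_{1,2}$. Together with the braid relation, that would force $t_1=r_{1,2}$. This cannot hold, since $r_{1,2}$ commutes with $A_1$ by (P1)(a) while $A_1^{t_1}=A_2$.

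\paragraph{How to close it.}
You already have the right element. (P2) with $i=1$, $j=2$ says $r_{1,2}t_1\overline{r_{1,2}}=\overline{t_1}r_{1,2}t_1$. This is equivalent to $t_1r_{1,2}t_1=r_{1,2}t_1r_{1,2}$, and also to $r_{1,2}^{t_1}=\overline{r_{1,2}}\,t_1\,r_{1,2}$. Substituting into your element gives
\[
x^{t_1}\overline{x}=r_{1,2}^{t_1}\,\overline{r_{1,2}}=\overline{r_{1,2}}\,t_1 .
\]
Hence $r_{1,2}=t_1\,x\,t_1\,\overline{x}\,\overline{t_1}\in G$. Then $s_3=\overline{r_{1,2}}\,x\in G$ and $s_1=s_3^{R^{g-2}}\in G$.

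The paper does the mirror-image computation. It uses $\overline{t_1}\,x\,t_1\,x\,\overline{t_1}=s_3^2r_{1,2}=s_3x$ to isolate $s_3$ first, then takes $r_{1,2}=x\overline{s_3}$. The underlying idea is the same as yours: the braid relation from (P2) plus the disjointness of $s_3$. The detours through $C_{1,2}$, $k_1$ and (P4) are unnecessary.
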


\begin{proof}
    
Let $G$ be the subgroup of $\m{V_g}$ generated by $S$, and denote the element ${r_{1,2}s_3}$ by $F$.  We will show that $G$ contains the generators given in Theorem~\ref{thmwaj}.
 
First, since $t_1 \in G$ and $R \in G$, conjugating $t_1$ by powers of $R$ yields all transpositions $t_i \in G$. Consequently, their product, the global shift $u = t_1 t_2 \dots t_{g-1}$, is in $G$.

Next, we extract $s_3$ from $F$. By Relation~(P2) in Subsection~\ref{gen}, using indices $i=1$ and $j=2$, we have the identity 
\begin{align*}
r_{1,2} t_1 \overline{r_{1,2}} = \overline{t_1} r_{1,2} t_1.
\end{align*}
Multiplying by $r_{1,2}$ from the right and $t_1$ from the left, we obtain the standard braid relation:
\begin{align*} t_1 r_{1,2} t_1 = r_{1,2} t_1 r_{1,2} \end{align*}
For $g \ge 3$, the knob twist $s_3$ is supported on the handle $3$, which is strictly disjoint from handles $1$ and $2$. Therefore, $s_3$ commutes with both $t_1$ and $r_{1,2}$. This allows us to group $s_3$ in the following:
\begin{align*}
\overline{t_1} F t_1 F \overline{t_1} &= \overline{t_1} (r_{1,2} s_3) t_1 (r_{1,2} s_3) \overline{t_1} \\
&= s_3^2 (\overline{t_1} r_{1,2} t_1 r_{1,2} \overline{t_1})
\end{align*}
Applying the braid relation inside the paranthesis gives:
\begin{align*} s_3^2 (\overline{t_1} (t_1 r_{1,2} t_1) \overline{t_1}) = s_3^2 r_{1,2} \end{align*}
Since $s_3$ and $r_{1,2}$ commute, we note that 
\[
s_3^2 r_{1,2} = s_3(s_3 r_{1,2}) = s_3 F.
\]
Because $\overline{t_1} F t_1 F \overline{t_1}$ is in $G$, we can isolate $s_3$ by multiplying by the inverse of $F_1$ on the right:
\[ s_3 = (\overline{t_1} F t_1 F \overline{t_1})\overline{F} \in G \].

Since $s_3$ is in $G$, the rest of the generators follow immediately. Multiplying $F_1$ by the inverse of $s_3$ isolates Wajnryb's third generator:
\[ F_1 \overline{s_3} = r_{1,2} s_3 \overline{s_3} = r_{1,2} \in G \]
Conjugating $s_3$ by $\overline{R}^2$ shifts it to handle $1$, yielding $s_1 \in G$.

We showed that $t_1$, $u$, $s_1$ and $r_{1,2}$ are in $G$.  Also, by our choice of generating set, $A_1$ is also in $G$. By Theorem~\ref{thmwaj}, $G$ is the entire handlebody group $\m{V_g}$.
\end{proof}

\begin{remark}
The reduction from five to four generators relies on two structural features of $\m{V_g}$:
\begin{itemize}
    \item the presence of a global symmetry (the rotation $R$), which shifts local generators;
    \item the ability to recover separating twists from a single carefully chosen element via conjugation.
\end{itemize}
This suggests that the redundancy in Wajnryb’s generating set arises from symmetries rather than purely algebraic relations.
\end{remark}
Starting with the same approach in the proof of Theorem~\ref{thm:genby4}, we now give a generating set with three elements to $\m{V_g}$ for $g \geq 5$.

\begin{theorem}\label{thm:genby3}
    For $g \geq 5$, the handlebody group $\m{V_g}$ is generated by the set
    \[
    S = \{R, t_1, \overline{A_3}r_{1,2}s_5^2\}.
    \]
\end{theorem}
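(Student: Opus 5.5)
Let $G$ be the subgroup generated by $S$, and write $F=\overline{A_3}r_{1,2}s_5^2$. As in the proof of Theorem~\ref{thm:genby4}, the plan is to show that $G$ contains Wajnryb's generators $A_1, s_1, r_{1,2}, t_1, u$ from Theorem~\ref{thmwaj}. Conjugating $t_1$ by powers of $R$ gives every $t_i\in G$, hence $u=t_1\cdots t_{g-1}\in G$. The real task is to split $F$ into its three commuting pieces. These pieces are supported near different handles: $A_3$ on handle $3$, $r_{1,2}$ near handles $1,2$, and $s_5^2$ on handle $5$. Since $g\ge 5$ these supports are pairwise disjoint, so the three factors commute with one another.

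The first step uses $t_1$ as a filter, exactly as in Theorem~\ref{thm:genby4}. By (P3), $t_1$ commutes with $A_3$; it also commutes with $s_5$, because of disjoint supports (or by conjugating the relation $[t_i,s_1]=1$). Relation (P2) gives the braid relation $t_1r_{1,2}t_1=r_{1,2}t_1r_{1,2}$. The same computation as before then yields
\[
\overline{t_1}Ft_1F\overline{t_1}=\overline{A_3}^{\,2}s_5^4\,r_{1,2},
\]
and therefore
\[
X:=(\overline{t_1}Ft_1F\overline{t_1})\overline{F}=\overline{A_3}s_5^2\in G.
\]

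Next I separate $\overline{A_3}$ from $s_5^2$ in $X$ using a transposition that moves only one of them. The element $t_3$ exchanges $\alpha_3$ and $\alpha_4$ and fixes handle $5$, so it commutes with $s_5$. Hence
\[
Y:=t_3X\overline{t_3}=\overline{A_4}s_5^2
\quad\text{and}\quad
Y\overline{X}=\overline{A_4}A_3\in G,
\]
using that $A_3$, $A_4$, $s_5$ commute. A second quantity is obtained with $t_4$, which swaps handles $4$ and $5$ but fixes handle $3$. By (P3), $t_4$ commutes with $A_3$, and it carries $s_5$ to $s_4$, since conjugation by $t_i$ moves the knob twist on handle $i+1$ to handle $i$. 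This gives $\overline{A_3}s_4^2\in G$. From these elements alone, though, it is not clear that $A_3$ can be isolated. The cleaner route is to bring in $R$: $R$ conjugates $A_i$ to $A_{i+1}$ and $s_i$ to $s_{i+1}$. Conjugating $X$ by $R^{-2}$ gives $\overline{A_1}s_3^2\in G$, and this element commutes with $X$. I would combine the conjugates $X^{R^k}$ with the differences $\overline{A_{i+1}}A_i$ obtained as above (all of which lie in $G$), to see that $A_iA_j^{-1}\in G$ and $s_i^2 s_j^{-2}\in G$ for all $i,j$.

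The hardest step is extracting $A_1$ (or some single $A_i$) by itself. Here I would use the knob twist: by (P3) and the definition $s_i=h_{-i,i}A_i^2$, the twist $s_3$ commutes with $A_3$, but $t_2$ (or $r$-type elements) mixes the two in a controlled way. The first concrete attempt is the braid-type identity $s_1t_1s_1t_1=t_1s_1t_1s_1$ (a type $B$ relation). I would apply it to the element $\overline{A_1}s_3^2\cdots$ after using $R$ to place the $A$ and $s$ factors on adjacent handles, so that a filtering computation like the first step isolates $A_i^{k}$ with $k=\pm1$. If that fails, the fallback uses (P4): $r_{1,2}^2=s_2C_{1,2}s_2\overline{C_{1,2}}$. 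Once $r_{1,2}$ is recovered, this lets me produce $s_2$-type factors, and comparing them with $s_5^2$ separates the $A_3$ part. Once $A_3\in G$, we get $s_5^2\in G$, then $r_{1,2}=A_3\,F\,s_5^{-2}\in G$, and $A_1=A_3^{R^{-2}}\in G$. Finally, $s_1$ follows from $r_{1,2}$ and $C_{1,2}$ through (P4), or from $s_1^2\in G$ together with the relations expressing $s_1$ through $r_{1,2}$ and the $t_i$. That completes Wajnryb's generating set.
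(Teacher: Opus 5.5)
Your opening is correct and agrees with the paper. All $t_i$ and $u$ lie in $G$, and the (P2) filter gives $X=\overline{A_3}s_5^2$. In fact $r_{1,2}=\overline{X}F\in G$ already at that point, so you need not wait for $A_3$. The differences $A_3\overline{A_4}$ and $s_i^2\overline{s_j}^{\,2}$ are also fine.

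The first gap is the step you yourself call the hardest. It is left as a list of attempts, and none of them works as described. Every piece you actually produce lies in the free abelian group on $A_1,\dots,A_g,s_1^2,\dots,s_g^2$. These pieces are $\overline{A_i}s_j^2$ (from conjugating $X$ by powers of $R$ or by $t_3$), $A_i\overline{A_j}$ and $s_i^2\overline{s_j}^{\,2}$. The group is free because $s_j^2$ is a power of $A_j$ times the twist about the separating curve $\gamma_{-j,j}$. Every such piece is killed by the homomorphism to $\mathbb{Z}$ sending every $A_i$ and every $s_j^2$ to $1$, whereas $A_3$ is not. So no product of these pieces equals $A_3$.

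The type-$B$ identity cannot act as a new filter. It involves $s_1$ itself, which is not yet known to be in $G$, while your elements only contain $s_j^2$. What is missing is a move that changes the kind of curve. The paper uses $r_{2,3}=r_{1,2}^{R}$ together with (P1)(a) transported by $R$, namely $A_3^{r_{2,3}}=C_2$ (with $C_i=C_{i,i+1}$). This turns $X$ into $\overline{C_2}s_5^2$, and from it the paper assembles $C_1\overline{C_2}$ and $s_2^2\overline{C_1}$ in $G$. Only then does (P4) help: $C_1\overline{s_2}^{\,2}\,r_{1,2}^2=C_1(\overline{s_2}C_1s_2)\overline{C_1}$ is a single Dehn twist, which the paper calls $C_{1,-2}$ after a further conjugation.

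The second gap is the knob twist itself. Even granting $A_3$, you only get $s_5^2$, and your route from there fails. With $C_1$ and $s_2^2$ in $G$, relation (P4) $r_{1,2}^2=s_2C_1s_2\overline{C_1}$ yields only the twist $\overline{s_2}C_1s_2$, never an odd power of $s_2$. Also, none of the listed relations expresses $s_1$ through $r_{1,2}$ and the $t_i$.

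In the paper the odd power comes from (P5). It builds $\overline{s_3}^{\,2}k_1$, $k_1\overline{k_2}$ and $k_2\overline{A_3}$ in $G$, and obtains $r_{1,3}$ by conjugating $r_{1,2}$ with a $k_2$-type element. It then strips the (P5) expression for $r_{1,3}$, using (P4) for $r_{1,3}^2$, down to $A_4s_2$. This element is used to conjugate $C_{1,-2}$ back to $C_1$, which gives $A_2=C_1^{\overline{r_{1,2}}}$ and $s_2=\overline{A_4}\,(A_4s_2)$. So in the paper's route both of your missing steps rest on (P1)(a) and (P5), and your proposal uses neither.
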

\begin{proof}
The strategy of the proof is to systematically isolate Wajnryb's generators from our initial $3$-element set. Because the generating element $\overline{A_3}r_{1,2}s_5^2$ combines twists from disjoint regions of the handlebody, we can apply the standard braid relations and commutativity to algebraically separate them. The condition $g \geq 5$ ensures that the knob twist $s_5$ is supported on a handle strictly disjoint from those interacting with $A_3$ and $r_{1,2}$. We will define a sequence of elements $F_k \in G$ to successively extract these individual components.

Let $G$ be the subgroup of $\m{V_g}$ generated by the set $S$, and let $F_1 = \overline{A_3}r_{1,2}s_5^2$.

    Let $G$ be the subgroup of $\m{V_g}$ generated by the set $S$, and let $F_1 = \overline{A_3}r_{1,2}s_5^2$. Our aim is the same as in the proof of Theorem~\ref{thm:genby4}. Using Relation~(P2) as in Theorem~\ref{thm:genby4}, we know that
    \begin{align*}
        \overline{t_1}r_{1,2}t_1r_{1,2}\overline{t_1} = r_{1,2}.
    \end{align*}
    Since $\overline{A_3}$, $r_{1,2}$ and $s_5$ commute in pairs, we have that
    \begin{align*}
        \overline{t_1}F_1t_1F_1\overline{t_1} &= \overline{t_1}\mkern3mu\overline{A_3}r_{1,2}s_5^2t_1\overline{A_3}r_{1,2}s_5^2\overline{t_1}\\
        &=\overline{A_3}^2s_5^4\mkern3mu\overline{t_1}r_{1,2}t_1r_{1,2}\overline{t_1} \\
       &= \overline{A_3}^2 s_5^4r_{1,2} \in G.
    \end{align*}
    Multiplying this by $\overline{F_1}$ from the right,
    \begin{align*}
        &F_2 = \overline{A_3}^2 s_5^4r_{1,2}\overline{F_1} =\overline{A_3}s_5^2 \in G,
    \end{align*}
    and thus,
    \begin{align*}
        F_3 = F_1\overline{F_2} = \overline{A_3}r_{1,2}s_5^2  \overline{(\overline{A_3}s_5^2)} &= \overline{A_3}r_{1,2}s_5^2\mkern3mu \overline{s_5}^2A_3\\
        &=r_{1,2} \in G.
    \end{align*}
    Conjugating $F_3$ by $R$ shows that $r_{2,3}$ is in $G$, and the conjugation of $F_2$ by $r_{2,3}$ shows that
   \begin{align*}
        F_4 = F_2^{r_{2,3}} = (\overline{A_3}s_5^2)^{r_{2,3}} = \overline{C_2}s_5^2 \in G,
    \end{align*}
    where we used Relation~(P1)-(a). Then, we have that
    \begin{align*}
    F_5 = F_2 \overline{F_4} = \overline{A_3} s_5^2 \overline{s_5}^2 \mkern3mu C_2 = \overline{A_3}C_2 \in G.
    \end{align*}
    Next, recall the element $k_1$ that is defined in Section~\ref{preliminaries}. Considering the subgroup $G$, to use this element in conjugations, we define the following element.
    \begin{align*}
    F_6 = \overline{F_2}^{\overline{R}^2}\mkern3mu\overline{F_2}^{\overline{R}}t_1F_4^{\overline{R}} &= \overline{s_3}^2A_1\overline{s_4}^2A_2t_1\overline{C_1}s_4^2 \\&= \overline{s_3}^2\mkern3mu\overline{s_4}^2s_4^2A_1A_2t_1\overline{C_1} \\&=\overline{s_3}^2A_1A_2t_1\overline{C_1} = \overline{s_3}^2k_1 \in G.
    \end{align*}
Note that even though $\overline{F_2}^{\overline{R}^2}$ and $\overline{F_2}^{\overline{R}}$ seem like similar elements, they are completely different.
Conjugation of $F_2$ by $F_6^{R^2} = \overline{s_5}^2k_3$ yields
\begin{align*}
    &F_7=  F_2^{\overline{s_5}^2k_3}= (\overline{A_3}s_5^2)^{\overline{s_5}^2k_3} = \overline{A_4}s_5^2\in G.
\end{align*}
Therefore, $\overline{F_2}F_7 = A_3\overline{A_4}$ is in $G$.  Conjugation by $\overline{R}^2$ shows that
\begin{align*}
    &F_8 =(A_3\overline{A_4})^{\overline{R}^2}= A_1\overline{A_2} \in G.
\end{align*}

From $F_5$, $R$, and $F_8$, it is clear that
\begin{align*}
    F_9 =  F_5^{\overline{R}}F_8^R\overline{F_5} &= (\overline{A_2}C_1)(A_2\overline{A_3})(\overline{C_2}A_3)\\&= (C_1\overline{A_2})(A_2\overline{A_3})(A_3\overline{C_2}) \\&= C_1 \overline{C_2} \in G.
\end{align*}

Similarly,
\begin{align*}
    &F_{10} = F_8^RF_7^{\overline{R}^2}F_6 = (A_2\overline{A_3})(\overline{A_2}s_3^2)(\overline{s_3}^2k_1) = \overline{A_3}k_1 \in G,\\
    &F_{11} =F_{10}F_8^{R^2}\overline{F_{10}}^R = \overline{A_3}k_1A_3\overline{A_4}\mkern3mu\overline{k_2}A_4= k_1\overline{k_2} \in G.
\end{align*}

Note that by the definiton of $k_i$, we have 
\[
(r_{1,2})^{k_2} =r_{1,3}.
\]
Then, conjugation of $F_3$ by $F_6^R = \overline{s_4}^2k_2$ gives
\begin{align*}
    F_3^{\overline{s_4}^2k_2} = r_{1,2}^{\overline{s_4}^2k_2} = r_{1,3}^{\overline{s_4}^2} =  r_{1,3}\in G,
\end{align*}

and by Relation~(P5), we have:
\begin{align*}
    &r_{1,3} = s_3C_{\{1,2,3\}}s_3 \overline{C_{\{1,2,3\}}}k_2A_3A_1t_2\overline{C_1}\mkern3mu\overline{t_2}\mkern3mu\overline{r_{1,2}}s_2\overline{k_2}\mkern3mu\overline{t_1}k_2\overline{r_{1,2}}\overline{k_2}t_1 \in G.
\end{align*}

Additionally, by Relation~(P4),
\begin{align*}
    r_{1,3}^2 = s_3C_{\{1,2,3\}} s_3 \overline{C_{\{1,2,3\}}} \in G.
\end{align*}

Since $r_{1,3}$ is in $G$, 
\[
F_{12} =  s_3C_{\{1,2,3\}} s_3 \overline{C_{\{1,2,3\}}} \in G.
\]

It follows that

\begin{align*}
    F_{13} = \overline{F_{12}}r_{1,3} &=  (C_{\{1,2,3\}} \overline{s_3}\mkern3mu \overline{C_{\{1,2,3\}}} \overline{s_3}) (s_3C_{\{1,2,3\}}s_3 \overline{C_{\{1,2,3\}}}k_2A_3A_1t_2\overline{C_1}\mkern3mu\overline{t_2}\mkern3mu\overline{r_{1,2}}s_2\overline{k_2}\mkern3mu\overline{t_1}k_2\overline{r_{1,2}}\overline{k_2}t_1)\\ &= k_2A_3A_1t_2\overline{C_1}\mkern3mu\overline{t_2}\mkern3mu\overline{r_{1,2}}s_2\overline{k_2}\mkern3mu\overline{t_1}k_2\overline{r_{1,2}}\overline{k_2}t_1 \in G.
\end{align*}

Note that $\overline{t_1}\mkern3mu r_{1,3} t_1$ is in $G$ since $r_{1,3}$ and $t_1$ are both in $G$. We obtain the following element by multiplying $F_{13}$ by $\overline{t_1}r_{1,3}t_1$ on the right.

\begin{align*}
    F_{14} &= F_{13} \overline{t_1}\mkern3mu r_{1,3}t_1\\
    &= (k_2A_3A_1t_2\overline{C_1}\mkern3mu\overline{t_2}\mkern3mu\overline{r_{1,2}}s_2\overline{k_2}\mkern3mu\overline{t_1}k_2\overline{r_{1,2}}\overline{k_2}t_1)(\overline{t_1}r_{1,3}t_1)\\& = (k_2A_3A_1t_2\overline{C_1}\mkern3mu\overline{t_2}\mkern3mu\overline{r_{1,2}}s_2\overline{k_2}\mkern3mu\overline{t_1}\overline{r_{1,2}}^{k_2}t_1)(\overline{t_1}r_{1,3}t_1)\\&=(k_2A_3A_1t_2\overline{C_1}\mkern3mu\overline{t_2}\mkern3mu\overline{r_{1,2}}s_2\overline{k_2}\mkern3mu\overline{t_1}\overline{r_{1,3}}t_1)\overline{t_1}r_{1,3}t_1\\&=k_2A_3A_1t_2\overline{C_1}\mkern3mu\overline{t_2}\mkern3mu\overline{r_{1,2}}s_2\overline{k_2} \in G
\end{align*}

Multiplication of $F_{10}$ by $\overline{F_{11}}$ yields:
\begin{align*}
    F_{15} &= \overline{F_{11}}F_{10} = k_2\overline{k_1}\mkern3mu\overline{A_3}k_1 = k_2\overline{k_1}k_1\overline{A_3} = k_2 \overline{A_3} \in G.
\end{align*}

Then,
\begin{align*}
    F_{16} &= \overline{F_{15}}F_{14}\\ &= (A_3 \overline{k_2})(k_2A_3A_1t_2\overline{C_1}\mkern3mu\overline{t_2}\mkern3mu\overline{r_{1,2}}s_2\overline{k_2}) \\&= A_3^2A_1t_2\overline{C_1}\mkern3mu\overline{t_2}\mkern3mu\overline{r_{1,2}}s_2\overline{k_2} = A_3A_1t_2\overline{C_1}\mkern3mu\overline{t_2}\mkern3mu\overline{r_{1,2}}s_2A_3\overline{k_2} \in G.
\end{align*}

Multiplying this element by $\overline{t_2}\mkern3mu\overline{F_8}^{R^2} = \overline{t_2}A_4\overline{A_3}$, which is in $G$, from the left and $F_{15} = k_2\overline{A_3}$ from the right gives:

\begin{align*}
    F_{17} &= \overline{t_2}A_4\overline{A_3} F_{16}k_2\overline{A_3} \\&= \overline{t_2}A_4\overline{A_3} (A_3A_1t_2\overline{C_1}\mkern3mu\overline{t_2}\mkern3mu\overline{r_{1,2}}s_2A_3\overline{k_2})k_2\overline{A_3}\\&= A_4A_1\overline{C_1}\mkern3mu\overline{t_2}\mkern3mu\overline{r_{1,2}}s_2 \in G.
\end{align*}
Also,
\begin{align*}
F_{18} = (\overline{F_8}^RF_5)^{\overline{R}} &= (A_3\overline{A_2}\mkern3mu \overline{A_3} C_2)^{\overline{R}}\\&=(\overline{A_2}A_3 \overline{A_3} C_2)^{\overline{R}}\\&=(\overline{A_2} C_2)^{\overline{R}}\\&=\overline{A_1} C_1 \in G.
\end{align*}
Then, multiplying $F_{17}$ by $r_{1,2}t_2F_{18}$ from the left gives
\begin{align*}
     F_{19} &= r_{1,2}t_2F_{18}F_{17} \\&=r_{1,2}t_2\overline{A_1} C_1A_4A_1\overline{C_1}\mkern3mu \overline{t_2}\mkern3mu\overline{r_{1,2}}s_2 \\&= A_4s_2 \in G.
\end{align*}
 Picking $i=1$ and $j=2$ in Relation~(P4) gives the following equality:
\begin{align*}
    r_{1,2}^2 = s_2C_1s_2\overline{C_1}.
\end{align*}
Since $F_3^2 = r_{1,2}^2$ is in $G$, and therefore $s_2C_1s_2\overline{C_1} \in G$.  From $F_9$ and $F_4$,
\begin{align*}
    F_4 F_9^R F_9^{R^2} &= (\overline{C_2} s_5^2)( C_2 \overline{C_3})( C_3 \overline{C_4}) \\&=  s_5^2 \overline{C_2} C_2 \overline{C_3} C_3 \overline{C_4} = s_5^2 \overline{C_4} \in G,
\end{align*}
Then, 
\[
F_{20} = (s_5^2 \overline{C_4})^{\overline{R}^3} = s_2^2\overline{C_1} \in G.
\]
Using $F_{9}$, $F_{20}$ and $F_3^2 = r_{1,2}^2 = s_2C_1s_2\overline{C_1}$, we get the following element:
\begin{align*}
    F_{21} = \overline{F_9}\mkern3mu\overline{F_{20}}F_3^2F_9 &= C_2\overline{C_1}C_1 \overline{s_2}^2 s_2C_1s_2\overline{C_1}C_1\overline{C_2}\\&= C_2\overline{s_2}C_1 s_2 \overline{C_2} \\&= C_2 C_{1,-2} \overline{C_2} = C_{1,-2} \in G.
\end{align*}
Finally, recall the definition of $s_i$. These elements are called the knob twist elements, and they interchange between the negative and positive sign of the corresponding indices of the boundary curves. Then, conjugation of $F_{21}$ by $\overline{F_{19}}$ yields
\begin{align*}
    F_{22} = F_{21}^{\overline{F_{19}}} = C_{1,-2}^{\overline{s_2}\mkern3mu\overline{A_4}} = C_{1,-2}^{\overline{s_2}} = C_1 \in G. 
\end{align*}
It follows that,
\begin{align*}
    F_{23} = F_{21}^{\overline{F_3}} = C_1^{\overline{r_{1,2}}} = A_2 \in G.
\end{align*}
By using the rotation element, $A_1 \in G$.  Furthermore,
\begin{align*}
    \overline{F_{23}}^{R^2} F_{19} = \overline{A_4} A_4 s_2 = s_2 \in G.
\end{align*}
Again, using $R$, it is clear that $s_1$ is in $G$.  Moreover,  $u \in G$ since $t_1$ and $R$ are in $G$ as in Theorem~\ref{thm:genby4}.

Up to this point, we have shown that $G$ contains every element of the generating set given in Theorem~\ref{thmwaj}, and thus $G$ is the entire handlebody group.

\begin{remark}
    Recall that the handlebody groups is exponentially distorted.  Intuitively, this means that expressing a given element with respect to a chosen generating may require long words, as in the proof of Theorem~\ref{thm:genby3}.
\end{remark}

\end{proof}

\section{Lower Bounds}\label{section:4}
We now record what is known about lower bounds, using the notation $d(G)$ for the minimum number of generators of a group $G$.  Theorem~\ref{thm:genby4} establishes $d(\m{V_g}) \leq 4$ for 
$g \geq 3$, and Theorem~\ref{thm:genby3} shows that $d(\m{V_g}) \leq 3$ for $g \geq 5$.

\begin{proposition}\label{prop:lower}
For all $g \geq 2$, we have $d(\mathcal{M}(V_g)) \geq 2$.
\end{proposition}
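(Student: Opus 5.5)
To show $d(\mathcal{M}(V_g)) \geq 2$, it suffices to prove that $\mathcal{M}(V_g)$ is not cyclic. A cyclic group is abelian, so I will exhibit a nonabelian quotient, or even just two non-commuting elements inside $\mathcal{M}(V_g)$. The section heading mentions the Siegel parabolic subgroup and symplectic representations, so I plan to use the action on homology. The inclusion $\nu:\m{V_g}\to\m{\Sigma_g}$ composed with the symplectic representation $\m{\Sigma_g}\to \mathrm{Sp}(2g,\mathbb{Z})$ lands in the stabilizer of the Lagrangian subspace $L=\ker(H_1(\Sigma_g)\to H_1(V_g))$, which is spanned by the meridians $[\alpha_1],\dots,[\alpha_g]$. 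By Corollary~\ref{cormember}, elements of $\m{V_g}$ preserve meridians, hence preserve $L$. It is known that this image is the whole Siegel parabolic subgroup, consisting of the matrices $\begin{pmatrix} A & B\\ 0 & (A^{T})^{-1}\end{pmatrix}$. I only need a much weaker fact, though.

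The concrete approach is as follows. In the basis $([\alpha_1],\dots,[\alpha_g],[\beta_1],\dots,[\beta_g])$, the twist $A_1$ acts as a transvection $x\mapsto x+\langle x,\alpha_1\rangle \alpha_1$. This is a unipotent matrix with a nonzero entry in the upper-right block $B$. Next, choose a mapping class inducing a nontrivial automorphism of $L$, i.e.\ one with $A\neq \pm I$ and $A$ not commuting with $E_{11}$. For $g\geq 2$, the element $t_1$ swaps $\alpha_1$ and $\alpha_2$ up to sign (it interchanges the handles), so its block $A$ is, up to signs, a permutation matrix swapping the first two coordinates. Conjugating $A_1$ by $t_1$ gives $A_2$ (relation (P3): $A_1^{t_1}=A_2$). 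Since $[\alpha_1]\neq[\alpha_2]$ in homology, the transvections $A_1$ and $A_2$ have different images, so $t_1A_1\overline{t_1}\neq A_1$ already in $\mathrm{Sp}(2g,\mathbb{Z})$. Hence $t_1$ and $A_1$ do not commute, $\mathcal{M}(V_g)$ is nonabelian, and therefore it is not cyclic.

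The only point needing care is verifying that the homology images of $A_1$ and $A_2$ really differ. This follows because a transvection $T_v$ determines $\pm v$: its image $(T_v-I)(H_1)$ is $\mathbb{Z}v$, and $[\alpha_1]\neq\pm[\alpha_2]$ as independent basis classes. As an alternative argument avoiding homology, one could note that $A_1$ and $A_2$ are Dehn twists about distinct non-isotopic curves, so they are distinct in $\m{\Sigma_g}$, and $\nu$ is injective. Either route shows that $\m{V_g}$ is nonabelian for all $g\geq 2$, which gives the proposition. I expect no real obstacle here; the main subtlety is just ensuring that $t_1$ genuinely lies in $\m{V_g}$, which holds since it is one of Wajnryb's generators.
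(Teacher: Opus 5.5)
Your argument is correct and is essentially the paper's proof: both show $\mathcal{M}(V_g)$ is nonabelian because $A_1^{t_1}=t_1A_1\overline{t_1}=A_2\neq A_1$ by (P3), so the group cannot be cyclic. Your transvection argument in homology, or alternatively the non-isotopy of $\alpha_1,\alpha_2$ together with injectivity of $\nu$, supplies the justification of $A_2\neq A_1$ that the paper leaves implicit.
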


\begin{proof}
The group $\mathcal{M}(V_g)$ is non-abelian for $g \geq 2$: by 
relation~(P3), $A_1^{t_1} = A_2 \neq A_1$, so $A_1 t_1 \neq t_1 A_1$. 
Since every $1$-generated group is cyclic and hence abelian, we 
conclude $d(\mathcal{M}(V_g)) \geq 2$.

For $g = 2$ this also follows directly from Wajnryb's computation 
\cite{Wajnryb1998} of the abelianization:
\[
\mathcal{M}(V_2)^{\mathrm{ab}} \cong \mathbb{Z}_2 \oplus \mathbb{Z}_2,
\]
which requires at least two generators.
\end{proof}

\begin{remark}\label{rem:siegel}
We explain  why the bound $d(\mathcal{M}(V_g)) \geq 2$ appears difficult 
to improve by purely algebraic methods, and why the Siegel parabolic 
subgroup is the natural object to consider.

Every $\phi \in \mathcal{M}(V_g)$ acts on the homology 
$H_1(\Sigma_;\mathbb{Z}) \cong \mathbb{Z}^{2g}$ of the boundary surface, 
preserving the symplectic intersection form. This gives the restriction 
of the symplectic representation~\cite{Birman1975}
\[
\rho \colon \mathcal{M}(V_g) \longrightarrow \mathrm{Sp}(2g,\mathbb{Z}).
\]
Since every $\phi \in \mathcal{M}(V_g)$ maps meridian curves to meridian 
curves (Corollary~\ref{cormember}), it preserves the Lagrangian 
subspace $L = \ker(\iota_* \colon H_1(\Sigma_g;\mathbb{Z}) \to 
H_1(V_g;\mathbb{Z}))$ spanned by the classes $[\alpha_1],\ldots,[\alpha_g]$.
In the standard symplectic basis, this forces $\rho(\phi)$ to have 
block upper-triangular form \cite{Hirose2006}
\[
\rho(\phi) = \begin{pmatrix} A & B \\ 0 & (A^T)^{-1} \end{pmatrix},
\quad A \in GL_g(\mathbb{Z}),\ A^T B \text{ symmetric.}
\]
The subgroup of $\mathrm{Sp}(2g,\mathbb{Z})$ consisting of all such 
matrices is called the \emph{Siegel parabolic subgroup} 
$P_g(\mathbb{Z})$. The map $\rho|_{\mathcal{M}(V_g)}$ is in fact 
surjective onto $P_g(\mathbb{Z})$, yielding a chain of surjections:
\[
\mathcal{M}(V_g) \twoheadrightarrow P_g(\mathbb{Z}) 
\twoheadrightarrow GL_g(\mathbb{Z}).
\]
\noindent
Since $d(G) \geq d(Q)$ for any quotient $Q$ of $G$, a lower bound 
$d(P_g(\mathbb{F}_p)) \geq k$ would imply $d(\mathcal{M}(V_g)) \geq k$, 
where $P_g(\mathbb{F}_p)$ is the image of $P_g(\mathbb{Z})$ under 
reduction mod $p$. The group $P_g(\mathbb{F}_p)$ fits into an exact sequence \cite{OMeara1978}
\[
1 \to \mathrm{Sym}_g(\mathbb{F}_p) \to P_g(\mathbb{F}_p) 
\to GL_g(\mathbb{F}_p) \to 1,
\]
where the kernel $\mathrm{Sym}_g(\mathbb{F}_p) \cong \mathbb{F}_p^{g(g+1)/2}$ 
consists of symmetric matrices and the action of $GL_g(\mathbb{F}_p)$ 
on it is by $A \cdot B = ABA^T$.

The most accessible route is via the abelianization 
$P_g(\mathbb{F}_p)^{\mathrm{ab}}$. By the five-term exact sequence in 
homology, there is a surjection
\[
\bigl(\mathrm{Sym}_g(\mathbb{F}_p)\bigr)_{GL_g(\mathbb{F}_p)} 
\twoheadrightarrow P_g(\mathbb{F}_p)^{\mathrm{ab}} 
\twoheadrightarrow GL_g(\mathbb{F}_p)^{\mathrm{ab}} \to 0,
\]
where the left term denotes the coinvariants of the $GL_g(\mathbb{F}_p)$-action. 
Since $GL_g(\mathbb{F}_p)^{\mathrm{ab}} \cong \mathbb{F}_p^\times$ is 
cyclic, it suffices to determine whether the coinvariants contribute 
additional generators. However, for $g \geq 2$ and all primes $p$, 
the action of $GL_g(\mathbb{F}_p)$ on $\mathrm{Sym}_g(\mathbb{F}_p)$ 
is rich enough that the coinvariants vanish: every element $B \in 
\mathrm{Sym}_g(\mathbb{F}_p)$ can be expressed as a sum of terms 
$ABA^T - B$. Consequently $P_g(\mathbb{F}_p)^{\mathrm{ab}}$ is cyclic 
for all $g$ and $p$, yielding only $d(P_g(\mathbb{F}_p)) \geq 1$ via 
abelianization. In fact, $P_g(\mathbb{F}_p)$ is $2$-generated for all 
$g$ and $p$, so no lower bound beyond $d(\mathcal{M}(V_g)) \geq 2$ 
can be extracted from this approach. 
\end{remark}

\bibliographystyle{amsplain}

\bibliography{references.bib}	

\end{document}